\theoremstyle{plain}
\newtheorem{thm}{Theorem}[section]
\newtheorem{lem}[thm]{Lemma}
\newtheorem{prop}[thm]{Proposition}
\newtheorem{cor}[thm]{Corollary}
\newtheorem{conj}[thm]{Conjecture}
\theoremstyle{definition}
\newtheorem{defn}[thm]{Definition}
\theoremstyle{remark}
\newtheorem{rem}[thm]{Remark}
\newcommand{\lag}{\langle}
\newcommand{\rag}{\rangle}
\newcommand{\aaa}{\alpha}
\newcommand{\dis}{\displaystyle}
\newcommand{\GG}{\mathcal{G}}
\newcommand{\ep}{\epsilon}
\newcommand{\fa}{\forall}
\newcommand{\ti}{\tilde}
\newcommand{\mF}{\mathscr{F}}
\newcommand{\mE}{\mathscr{E}}
\newcommand{\mL}{\mathscr{L}}
\newcommand{\cu}{\mathcal{U}}
\newcommand{\ot}{\otimes}
\newcommand{\mO}{\mathcal{O}}
\newcommand{\form}{\langle,\rangle}
\newcommand{\ff}{\underline{\langle,\rangle}}
\DeclareMathOperator\rk{rank}
\DeclareMathOperator\ho{Hom}
\DeclareMathOperator\M{MSG}
\DeclareMathOperator\dv{div}
\DeclareMathOperator\Gr{Gr}
\DeclareMathOperator\supp{Supp}
\DeclareMathOperator\s{span}
\DeclareMathOperator\res{res}
\DeclareMathOperator\Id{Id}
\begin{document}
\title{Expected Dimensions of Higher-rank Brill-Noether Loci}
\author{Naizhen Zhang}
\address{Naizhen Zhang, Department of Mathematics, University of California, Davis, One Shields Avenue, Davis, CA 95616, USA}
\email{nzhzhang@math.ucdavis.edu}
\maketitle
\begin{abstract}
In this paper, we prove a new expected dimension formula for certain rank two Brill-Noether loci with fixed special determinant. This answers a question asked by Osserman and also leads to a new and much simpler proof of a theorem in \cite{Ofix}. Our result generalizes the well-known result by Bertram, Feinberg and independently Mukai on expected dimension of rank two Brill-Noether loci with canonical determinant and partially verifies a conjecture (in rank two) of Grzegorczyk and Newstead on coherent systems. 
\end{abstract}
\section{Introduction}
Brill-Noether theory studies the moduli space of vector bundles of fixed rank, with a given amount of global sections, over
algebraic curves with given genus. Based on the theory of determinantal varieties, the expected dimension of the moduli stack
of vector bundles on a genus $g$ curve, of rank $r$ and fixed determinant $\mL$, together with a
$k$-dimensional space of sections is 
$$\rho(r,d,k,g)=(r^2-1)(g-1)-k(k-d+r(g-1)).$$ 
When $\mL=\omega$ is the canonical line bundle and the rank of
the vector bundles is two, this number simplifies to $3g-3-k^2$. However, utilizing the symplectic form induced by the
determinant map, Bertram, Feinberg in \cite{Ber} and Mukai in \cite{MCan} obtained asymptotically much larger expected
dimension in this particular case, namely, a lower bound for the dimension of the corresponding moduli stack which is
$3g-3-\binom{k+1}{2}$. For a wide range of $(g,k)$, this is known to be a sharp lower bound. (See \cite{M1}, \cite{LNP} \cite{Zhang1}.)

More generally, when the determinant $\mL$ is special and $m=h^1(\mL)>1$, there are $m$ independent maps from $\mL$ to the $\omega$.
This further induces $m$ independent symplectic forms on the space of sections (as compared to \textit{the} symplectic form in
the canonical determinant case). In \cite{Ofix} and \cite{Osp2}, Osserman generalizes the idea of Bertram, Feinberg and Mukai
to obtain new expected dimensions for some more general cases with special determinant. There, main machinery is the theory
of multiply symplectic Grassmannians, which plays a similar role to the theory of symplectic Grassmannians in the canonical
determinant case.

Due to the existence of more than one symplectic form, a multiply symplectic Grassmanian (unlike a symplectic Grassmannian) is
not necessarily smooth: it is the intersection of various symplectic Grassmannians inside an ambiance Grassmannian and one needs to analyze whether these symplectic Grassmannians intersect transversely. More concretely, we shall analyze when the tangent
space of a multiply symplectic Grassmannian $MSG(k,E,\form_1,...,\form_m)$ has the expected dimension $k(n-k)-m\binom{k}{2}$.

After re-formulating the question, we first give a new and much simpler proof of the following theorem, which is a key technical result in \cite{Ofix} (see Proposition 4.6, \cite{Ofix}):
\begin{thm}\label{prop:tech}
   Consider the doubly symplectic Grassmannian, $MSG(k,E,\form_{1,2})$. The tangent space at a point $[V]$ has dimension
   $$\dim(T_VMSG(k,E,\form_{1,2}))=k(n-k)-2\binom{k}{2}$$ if and only if for every linear combination $\form$ of
   $\form_1,\form_2$, there does not exist a 2-dimensional subspace $V'$ of $V$ such that $\form|_{V'\times E/V}=0$.
\end{thm}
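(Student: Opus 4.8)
The plan is to recast the statement as a question about a pencil of rectangular matrices, settle the easy implication by an explicit rank-two construction, and prove the hard implication using a lemma on self-adjoint operators for a symplectic form. To reduce to linear algebra, fix a splitting $E=V\oplus V''$; each $\form_j$ then has vanishing $V\times V$ block (since $V$ is isotropic) and a $V\times V''$ block $M^j$, a $k\times(n-k)$ matrix of \emph{full row rank} $k$ — otherwise a nonzero vector of $V$ would lie in the radical of the nondegenerate form $\form_j$. Differentiating the isotropy conditions, $T_V\M(k,E,\form_{1,2})$ is the kernel of $\Phi\colon\ho(V,E/V)\to\w^2 V^*\oplus\w^2 V^*$ carrying $\phi$ to the pair of alternating forms $(v,w)\mapsto\langle v,\phi(w)\rangle_j-\langle w,\phi(v)\rangle_j$; as source and target have dimensions $k(n-k)$ and $2\binom k2$, the claimed equality of dimensions holds iff $\Phi$ is surjective. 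Dualizing via the trace pairing, $\Phi$ is \emph{not} surjective precisely when there are skew-symmetric $k\times k$ matrices $X_1,X_2$, not both zero, with $X_1M^1+X_2M^2=0$. On the other side, for $\form=c_1\form_1+c_2\form_2$ a two-dimensional $V'\subseteq V$ with $\form|_{V'\times E/V}=0$ is the same as a two-dimensional subspace of the radical of $\form$, i.e. of the left kernel of $c_1M^1+c_2M^2$; so such a $V'$ exists for some nonzero $(c_1,c_2)$ iff $\rk(c_1M^1+c_2M^2)\le k-2$ for some nonzero $(c_1,c_2)$. Thus the theorem becomes the equivalence: skew $X_1,X_2$, not both zero, with $X_1M^1+X_2M^2=0$ exist if and only if some nonzero combination $c_1M^1+c_2M^2$ has rank at most $k-2$.

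The ``$\Leftarrow$'' direction (hence the ``only if'' of the theorem, contrapositively) is immediate: given $\rk(c_1M^1+c_2M^2)\le k-2$, pick linearly independent $a,a'$ in the left kernel of $M:=c_1M^1+c_2M^2$ and set $X:=a(a')^{t}-a'a^{t}$, a skew matrix of rank two with $XM=0$; then $X_1:=c_1X$, $X_2:=c_2X$ work. For the reverse direction I would exploit full row rank of the $M^j$: since each $M^j$ is surjective, $\mathrm{im}(X_jM^j)=\mathrm{im}(X_j)$, so $X_1M^1=-X_2M^2$ forces $\mathrm{im}(X_1)=\mathrm{im}(X_2)=:W$; and the kernel of a skew map being the annihilator of its image, $\ker X_1=\ker X_2$. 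Set $2r=\dim W=\rk X_1$, which is even and positive (the $X_j$ are not both zero). Choose a complement $U$ of $\ker X_1$ and write $\bar M^j$ for the $U$-component of $M^j$: then $\bar M^1=A\bar M^2$, where $A:=-(X_1|_U)^{-1}(X_2|_U)$ is an invertible operator on $U$, and a short computation using skew-symmetry of $X_1,X_2$ shows $A$ is \emph{self-adjoint} for the nondegenerate symplectic form $\omega(\lambda,\lambda'):=\lambda^{t}X_1\lambda'$ on $U$. The crux is then a lemma I would establish separately: a self-adjoint operator for a symplectic form has every eigenspace of dimension at least two. This holds because its generalized eigenspaces are pairwise $\omega$-orthogonal, hence $\omega$-nondegenerate and even-dimensional, and the nilpotent part on each is itself self-adjoint, whereas a single nilpotent Jordan block of size $\ge 2$ can never be self-adjoint for a symplectic form (self-adjointness makes $\omega(e_i,e_j)$ depend only on $i+j$ in a Jordan basis, and skew-symmetry then forces $\omega\equiv 0$). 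Granting this, choose an eigenvalue $\mu$ of $A$ and take $(c_1,c_2)=(1,-\mu)$: the $U$-component of $c_1M^1+c_2M^2$ is $(A-\mu I)\bar M^2$, of rank $\le 2r-2$, while the complementary component has rank $\le\dim\ker X_1=k-2r$, so $\rk(c_1M^1+c_2M^2)\le k-2$.

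The step I expect to be the real obstacle is this eigenspace lemma: a single eigenvector of $A$ only yields a combination of rank $\le k-1$, which falls one short of the expected-dimension count, and it is symplectic self-adjointness — equivalently, the impossibility of a lone Jordan block — that closes the gap. A secondary point needing care is the full row rank of $M^1,M^2$, which is exactly where the hypothesis that the forms are genuinely symplectic (rather than merely alternating) is used and which makes the image–kernel coincidences in the hard direction valid.
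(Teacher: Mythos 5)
Your argument is correct, and in outline it parallels the paper's: both reduce the failure of the expected tangent-space dimension to the existence of alternating $k\times k$ matrices $X_1,X_2$, not both zero, with $X_1M^1+X_2M^2=0$ (this is exactly the content of Propositions \ref{main3} and \ref{main2}, where your $X_t$ appear as the alternating components of a map $\psi:V^*\to V\ot\Omega$ killed by $j_V$, and your full-row-rank blocks $M^j$ are hidden inside $j_V$), and both handle the easy direction by the same rank-two construction. The substantive divergence is in the hard direction. The paper normalizes $X_1$ to standard symplectic form, cuts down to the subspace where it is invertible (using nondegeneracy of $\form_2$ to see $X_2$ is invertible there as well), and gets even-dimensionality of the eigenspace of $X_2X_1^{-1}$ at $d$ by identifying it with the kernel of the singular alternating matrix $dX_2^{-1}-X_1^{-1}$, quoting the structure theorem for alternating forms. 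You instead derive $\ker X_1=\ker X_2$ from surjectivity of the $M^j$, pass to a complement $U$, and prove your eigenspace lemma for symplectically self-adjoint operators by a Jordan-block analysis. The two key lemmas are equivalent — self-adjointness of $A$ for $\omega=X_1|_U$ says precisely that $X_1A$ is alternating, so $\ker(A-\mu I)=\ker(X_1A-\mu X_1)$ is already the kernel of an alternating form of even rank, which short-circuits your Jordan argument — but your route has the merit of making the pencil reformulation ($\rk(c_1M^1+c_2M^2)\le k-2$ for some nonzero $(c_1,c_2)$) fully explicit and of isolating a reusable statement about self-adjoint operators on symplectic spaces. Both proofs share the same implicit hypotheses: $K$ algebraically closed (to produce the eigenvalue) and characteristic different from $2$ (in the skew/alternating bookkeeping).
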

This leads to one of the main results in \cite{Ofix}:
\begin{thm}\label{thm1}Let $C$ be a smooth projective curve of genus $g$, and $\mL$ a line bundle of degree $d$ on $C$. Denote $\GG(2,\mL,k)$
to be the moduli space of the following data: a rank two vector bundle of degree $d$ (with determinant $\mL$), together with a
$k$-dimensional space of global sections. Suppose that $h^1(C,\mL)\ge 2$. Then every irreducible component of $\GG(2,\mL,k)$ has
dimension at least
$$\rho^2_{\mL}(k,g):=\rho(2,d,k,g)-g+2\binom{k}{2}.$$
\end{thm}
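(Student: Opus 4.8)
The plan is to prove the bound pointwise, following the strategy of \cite{Ofix}: I will exhibit, near an arbitrary point $[(E,V)]\in\GG(2,\mL,k)$, a presentation of $\GG(2,\mL,k)$ as the zero locus of a section of a vector bundle over a smooth base whose virtual dimension (dimension of base minus rank of bundle) equals $\rho^2_{\mL}(k,g)$; every irreducible component through $[(E,V)]$ then has dimension at least $\rho^2_{\mL}(k,g)$.

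First I would recall the deformation theory of the pair $(E,V)$ with $\det E$ fixed equal to $\mL$: writing $n:=h^0(E)$, the tangent space is assembled from $\ho(V,H^0(E)/V)$ (moving $V$, contributing $k(n-k)$), from $H^1(\operatorname{End}_0 E)$ (deforming $E$ with fixed determinant, contributing $3(g-1)$), and from the cup-product map $H^1(\operatorname{End}_0 E)\to\ho(V,H^1(E))$ whose cokernel houses the obstructions; taking Euler characteristics reproduces the classical lower bound $\dim_{[(E,V)]}\GG(2,\mL,k)\ge\rho(2,d,k,g)$. The new point is that this obstruction space is partly illusory because $\mL$ is special: Serre duality together with $E^\vee\cong E\ot\mL^{-1}$ (from $\wedge^2E\cong\mL$) identifies $H^1(E)^\vee\cong H^0(E\ot\oo\mL^{-1})$, so multiplication by a section of $\oo\mL^{-1}$ carries $H^0(E)$ into $H^1(E)^\vee$, and the antisymmetry of $\wedge^2$ makes the trace-free Petri map $V\ot H^0(E^\vee\ot\oo)\to H^0(\operatorname{End}_0 E\ot\oo)$ \emph{symmetric} on the subspace $V\ot\bigl(H^0(\oo\mL^{-1})\cdot V\bigr)$. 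Equivalently $V$ is isotropic for the skew forms on $H^0(E)$ induced by sections of $\oo\mL^{-1}$, and, locally over the (smooth) moduli of the underlying bundle, the defining condition of $\GG(2,\mL,k)$ is precisely the condition carving out a multiply symplectic Grassmannian inside $\Gr(k,H^0(E))$.

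Here Theorem \ref{prop:tech} enters. Since $h^1(\mL)=h^0(\oo\mL^{-1})\ge 2$ I may pick a general pencil of sections of $\oo\mL^{-1}$, yielding two skew forms $\form_1,\form_2$, so that the relevant part of the local model of $\GG(2,\mL,k)$ is the doubly symplectic Grassmannian $MSG(k,H^0(E),\form_{1,2})$ over the smooth determinant-fixed moduli of $E$. As an intersection of two symplectic Grassmannians, each of codimension $\binom{k}{2}$ in $\Gr(k,H^0(E))$, $MSG(k,H^0(E),\form_{1,2})$ is cut out there by only $2\binom{k}{2}$ equations; propagating this through the bookkeeping above — and balancing it against the $g$-dimensional contribution of the fixed-determinant constraint — replaces the classical virtual dimension $\rho(2,d,k,g)$ by $\rho(2,d,k,g)-g+2\binom{k}{2}=\rho^2_{\mL}(k,g)$. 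Theorem \ref{prop:tech} is exactly what certifies that this reorganization is honest at a general point: where its non-degeneracy criterion for the pencil holds, the two symplectic conditions are independent and the count $2\binom{k}{2}$ is exact; where it fails, $T_VMSG(k,H^0(E),\form_{1,2})$ is only larger, so the lower bound is never violated.

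I expect the main obstacle to be the middle step: rigging the local model of $\GG(2,\mL,k)$ so that the fixed-special-determinant constraint and the section constraint interact through a \emph{doubly} — rather than $h^1(\mL)$-fold — symplectic Grassmannian, and then carrying out the Euler-characteristic bookkeeping, in particular reconciling the $+2\binom{k}{2}$ gained from the two symplectic forms against the $-g$ forced by fixing the determinant. A secondary technical point is that the induced skew forms on $H^0(E)$ need not be nondegenerate; but Theorem \ref{prop:tech}, stated for all linear combinations of $\form_1,\form_2$, is built to absorb this.
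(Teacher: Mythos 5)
Your overall framework---realize $\GG(2,\mL,k)$ locally inside a relative Grassmannian as an intersection locus whose virtual dimension is $\rho^2_{\mL}(k,g)$, with the doubly symplectic Grassmannian supplying the gain of $2\binom{k}{2}$---is indeed the route taken here, following \cite{Ofix}; note that this paper only supplies a new proof of the input Theorem \ref{prop:tech} and delegates the rest of the deduction to \cite{Ofix} and the codimension count of Proposition 2.4 of \cite{Osp2}. But your sketch has a genuine gap, located exactly at the sentence you use to dismiss the degenerate case: ``where it fails, $T_VMSG(k,H^0(E),\form_{1,2})$ is only larger, so the lower bound is never violated.'' This has the logic backwards. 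The gain of $2\binom{k}{2}$ does \emph{not} come from the bare fact that the doubly symplectic Grassmannian is cut out by $2\binom{k}{2}$ equations; adding those equations to the other defining conditions of $\GG(2,\mL,k)$ inside the relative Grassmannian of $p_*(\mE(D')/\mE(-D'))$ only makes the naive bound \emph{worse} (one gets $\rho(2,d,k,g)-2\binom{k}{2}$). The gain comes from intersecting the two loci ``$V\subset p_*(\mE(D'))$'' and ``$V\subset p_*(\mE/\mE(-D'))$'' \emph{inside} the relative multiply symplectic Grassmannian: both subspaces are automatically isotropic, so each containment condition becomes redundant by $\binom{k}{2}$ per form when restricted to $T_V\M$. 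That redundancy count, and hence the whole inequality, is only valid where $T_V\M$ has the expected dimension $k(n-k)-2\binom{k}{2}$; an excess in $T_V\M$ enlarges the ambient of the intersection and directly threatens the lower bound rather than reinforcing it. This is precisely why Theorem \ref{prop:tech} is needed in the ``only if'' direction: it identifies the failure locus concretely (a pencil $V'\subset V$ and a form $\form$ in the span of $\form_1,\form_2$ with $\form|_{V'\times E/V}=0$), which translates on the curve into two sections lying in a common sub-line bundle admitting a map to $\oo\ot\mL^{-1}$---a locus that must then be bounded by a separate argument (this is the content of Proposition 2.4 of \cite{Osp2} together with the analysis in \cite{Ofix}, and it is what Section \ref{non} of this paper illustrates). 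Your proposal omits this step entirely, and with it the actual reason the theorem holds for $m=2$ without any genericity hypothesis on $C$.

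A secondary, partly acknowledged gap: you never carry out the bookkeeping that produces the $-g$, and you work with $\Gr(k,H^0(E))$ directly even though $h^0(E)$ jumps in families. The twist by the auxiliary divisor $D$ and the identification of $\GG(2,\mL,k)$ with the locus of simultaneously isotropic subspaces contained in both $p_*(\mE(D'))$ and $p_*(\mE/\mE(-D'))$ are not optional decoration: they are the mechanism that makes the relative construction well defined and that produces the correct count, including the $-g$ term (which reflects that $H^0(\mE(D))$ falls short of being Lagrangian by $2g-2-d$ when $\mL$ is special of degree $d<2g-2$). As written, the proposal records the right target and the right auxiliary objects but not a proof.
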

As pointed out by Osserman in \cite{Ofix}, this result may be generalized to cases where $h^1(C,\mL)>2$ only if one poses some non-degeneracy condition on the vector bundles. In this paper, we propose a reasonable non-degeneracy condition to realize this generalization. Our main result is the following:
\begin{thm}\label{thm2}
For a Petri general curve $C$, denote by $\GG^{\text{gg}}(2,\mL,k)$ the open sub-stack of $\GG(2,\mL,k)$ parametrizing data whose underlying vector bundle is generically globally-generated by elements in the $k$-dimensional subspace $V$ of sections. Then, every irreducible component of $\GG^{\text{gg}}(2,\mL,k)$ such that $h^1(\mL)\ge m$ has dimension at least 
$$\rho^2_{\mL}(k,g):=\rho(2,d,k,g)-g+m\binom{k}{2}.$$
\end{thm}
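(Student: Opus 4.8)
The plan is to realize $\GG^{\text{gg}}(2,\mL,k)$, over a suitable smooth base, as an open subset of a multiply symplectic Grassmannian bundle, and then to read the dimension bound off the elementary fact that each of the $m$ isotropy conditions imposes at most $\binom{k}{2}$ equations on the ambient Grassmannian.

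First I would set up a smooth ambient model. A point of $\GG^{\text{gg}}(2,\mL,k)$ is a coherent system $(E,V)$ with $\det E\cong\mL$ and $E$ generically globally generated by $V$, and its first-order deformations with fixed determinant are governed by the hypercohomology of the usual two-term deformation complex, whose hypercohomology Euler characteristic yields the naive expected dimension $\rho(2,d,k,g)$. Since $h^0(E)$ jumps over the moduli of bundles, I would rigidify by fixing an effective divisor $D$ of large degree, so that $\pi_*(E(D))=:\mathcal{H}$ is a vector bundle of constant rank over an \'etale atlas of the smooth stack of rank-two bundles with determinant $\mL$, and present $\GG(2,\mL,k)$ as the degeneracy locus in the Grassmannian bundle $\Gr(k,\mathcal{H})$ along which the tautological rank-$k$ subbundle maps to zero in $\pi_*(E(D)|_D)$. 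Applying the standard degeneracy-locus estimate to this presentation reproves that every component has dimension at least $\rho(2,d,k,g)$.

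Next comes the key point, the multiply symplectic structure. Because $E$ has rank two, the canonical isomorphism $E\cong E^\vee\otimes\mL$ turns each $\phi\in\ho(\mL,\oo)=H^0(\oo\otimes\mL^{-1})$ into an alternating pairing $E\otimes E\to\oo$; via Serre duality on $C$ these assemble into $m=h^1(\mL)$ alternating forms $\form_1,\dots,\form_m$ on the relevant space of sections, directly generalizing the single symplectic form of Bertram--Feinberg--Mukai and the pair of forms used in \cite{Ofix}. The heart of the argument is to show that \emph{exactly} when $E$ is generically globally generated by $V$, the obstruction map of the deformation complex is compatible with the $\mathrm{Sym}^2\!\oplus\!\w^2$ splitting induced by these pairings, so that each restriction $\form_i|_V$ vanishes identically along $\GG^{\text{gg}}(2,\mL,k)$; here I would use Petri generality of $C$ to keep the forms $\form_i$ and the relevant section spaces in the expected position. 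Thus $\GG^{\text{gg}}(2,\mL,k)$ is cut out not merely inside $\Gr(k,\mathcal{H})$ but inside the multiply symplectic Grassmannian bundle $\M(k,\mathcal{H},\form_1,\dots,\form_m)$. This is the uniform-in-$m$ substitute for the $m=2$ analysis behind Theorem \ref{prop:tech}; for the present lower bound only the trivial estimate ``each $\form_i$ contributes codimension at most $\binom{k}{2}$'' is needed, not the full transversality characterization of Theorem \ref{prop:tech}, and this is the step that consumes the hypothesis $h^1(\mL)\ge m$.

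Finally I would assemble the count: combining the degeneracy-locus estimate of the first step with the bound that the $m$ conditions $\form_i|_V=0$ cut $\Gr(k,\mathcal{H})$ in codimension at most $m\binom{k}{2}$, and keeping careful track of the twist by $D$, gives $\dim\ge\rho(2,d,k,g)-g+m\binom{k}{2}=\rho^2_{\mL}(k,g)$ for every irreducible component on which $h^1(\mL)\ge m$, the correction term $-g$ arising because the pairings above are valued in $H^0(\oo)$ rather than in the ground field. I expect the main obstacle to be the middle step: proving rigorously that generic global generation forces the obstruction class to have no component in the $\w^2$-summand --- i.e.\ that $\form_i|_V=0$ propagates over the whole locus and not merely at a chosen point --- and, at the same time, verifying that passing among the three models (the coherent-system deformation complex, the degeneracy locus in $\Gr(k,\mathcal{H})$, and the multiply symplectic Grassmannian bundle) loses no dimension beyond the accounted-for $-g$.
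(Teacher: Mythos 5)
Your setup (the relative Grassmannian of $p_*(\mE(D')/\mE(-D'))$, the $m$ alternating forms induced by $H^0(\omega\ot\mL^{-1})$, and the realization of $\GG(2,\mL,k)$ inside a multiply symplectic Grassmannian bundle) matches the framework the paper imports from \cite{Ofix}. But the heart of your argument is inverted, and this is a fatal gap. You assert that ``only the trivial estimate `each $\form_i$ contributes codimension at most $\binom{k}{2}$' is needed, not the full transversality characterization.'' That ``at most'' estimate is automatic and cannot produce anything beyond the naive bound $\rho(2,d,k,g)$: the isotropy conditions are not extra equations cutting out $\GG(2,\mL,k)$ inside the degeneracy locus --- they hold identically on all of $\GG(2,\mL,k)$ (generically globally generated or not), since $V\subseteq H^0(\mE)\subseteq H^0(\mE(D))$ and $H^0(\mE(D))$ is isotropic by the residue theorem (Lemma 5.1--5.2 of \cite{Ofix}). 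Imposing conditions that are already satisfied, with codimension ``at most'' $m\binom{k}{2}$, changes no dimension count. The gain of $m\binom{k}{2}$ requires the \emph{opposite} inequality: at every point $[V]$ of $\GG^{\text{gg}}(2,\mL,k)$ the tangent space of $\M(k,H^0(\ti{\mE}))$ must have codimension \emph{exactly} $m\binom{k}{2}$ in the tangent space of the ambient Grassmannian, so that the codimension-counting result (Proposition 2.4 of \cite{Osp2}) applies. This is precisely why Osserman could not push Theorem \ref{thm1} past $m=2$ without a nondegeneracy hypothesis, and it is the entire content of Propositions \ref{main3}, \ref{main2}, \ref{main6} and \ref{main-basic}.

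Relatedly, your proposal never genuinely uses the hypotheses of the theorem. You spend Petri generality on showing $\form_i|_V=0$ along $\GG^{\text{gg}}$, but that vanishing is unconditional; Petri generality and generic global generation are consumed by exactly the transversality you declared unnecessary. The paper's route is: the tangent space of the multiply symplectic Grassmannian drops below the expected codimension if and only if a certain element lies in the kernel of the multiplication map $H^0(C,\mL^*\ot\omega)\ot H^0(C,\mE)\to H^0(C,\mE^*\ot\omega)$ (Proposition \ref{main6}); this map is injective when $C$ is Petri general and the sections of $\mE$ are not colinear (Proposition \ref{main-basic}, via the commutative square with the classical Petri map and Gieseker--Petri); and Lemma \ref{non1} exhibits how the injectivity genuinely fails otherwise. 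Your proposal contains no substitute for this step, so the count in your final paragraph does not go through.
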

It is worth mentioning that as a generalization of the result of Bertram, Feinberg and Mukai, our theorem also partially confirms a recent conjecture by Grzegorczyk and Newstead on coherent systems (see section \ref{mainsection}).
\subsection*{Acknowledgements.}The author would like to thank his advisor, Brian Osserman, for introducing this problem and his
tireless instruction. In particular, the author would like to thank him for providing an interesting argument to clarify one non-example in the paper.  (See Section \ref{non}.) 
\section{Notation and Terminology}
Hereafter, we work over some base field $K$.

The object of interest in this note is the multiply symplectic Grassmannian:
\begin{defn}
Let $E$ be an $n$-dimensional vector space over an algebraically closed field $K$. Suppose $\form_1,...,\form_m$ are $m$
independent symplectic forms on $E$. The multiply symplectic Grassmannian, $MSG(k,E;\form_1,...,\form_m)$, is defined to be the
sub-scheme of $Gr(k,E)$ parametrizing all $k$-dimensional subspaces $V$ of $E$ which are simultaneously isotropic with respect
to $\form_1,...,\form_m$.
\end{defn}
\begin{rem}
  Since $V$ is isotropic with respect to $\form_t$, for any $v\in V$, $\lag v,\cdot \rag_t$ naturally induces an element in
  $(E/V)^*$. Consequently, fixing $v,v'\in V$, the map $\ho(V,E/V)\to K:g\mapsto \lag v,g(v')\rag_t$ defines an element in
  $\ho(V,E/V)^*$.
\end{rem}
As mentioned earlier, we seek to study tangent spaces, $T_VMSG(k,E;\form_1,...,\form_m)$, of $MSG(k,E;\form_1,...,\form_m)$. So
we first give a concrete description of them.
\begin{lem}
  Let $V$ be a subspace of $E$ which is isotropic with respect to a symplectic form $\form$ over $E$. Then, $T_VMSG(k,E;\form)$
  can be identified with $$\{f\in \ho(V,E/V)|\langle\cdot,f(\cdot)\rangle\in S^2(V)^*\}.$$
\end{lem}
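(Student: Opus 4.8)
The statement to prove is the description of the tangent space to a symplectic Grassmannian $MSG(k,E;\form)$ at an isotropic point $V$: namely $T_V MSG(k,E;\form)$ is identified with the set of $f\in\ho(V,E/V)$ for which the bilinear form $(v,v')\mapsto\langle v,f(v')\rangle$ is symmetric, i.e. lies in $S^2(V)^*$.

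The plan is to compute the tangent space directly from the defining equations, using the standard identification $T_V\Gr(k,E)\cong\ho(V,E/V)$. First I would recall how a tangent vector $f\in\ho(V,E/V)$ corresponds to a first-order deformation of $V$: working over $K[\epsilon]/(\epsilon^2)$, the deformed subspace $V_\epsilon$ is spanned by the elements $v+\epsilon\,\widetilde{f(v)}$ for $v\in V$, where $\widetilde{f(v)}\in E$ is any lift of $f(v)\in E/V$ (the choice of lift does not affect the computation modulo $\epsilon^2$ after reducing into $E/V$). Then $[V_\epsilon]$ lies in $MSG(k,E;\form)$ precisely when $V_\epsilon$ is isotropic for $\form$ (extended $K[\epsilon]$-bilinearly), i.e. when $\langle v+\epsilon\,\widetilde{f(v)},\,v'+\epsilon\,\widetilde{f(v')}\rangle=0$ for all $v,v'\in V$.

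Next I would expand this condition. Since $V$ is isotropic, $\langle v,v'\rangle=0$, so the $\epsilon^0$ term vanishes identically, and (as $\epsilon^2=0$) the condition reduces to the vanishing of the $\epsilon^1$ term:
$$\langle v,\widetilde{f(v')}\rangle+\langle\widetilde{f(v)},v'\rangle=0\qquad\text{for all }v,v'\in V.$$
Here I would note that $\langle v,\widetilde{f(v')}\rangle$ depends only on $f(v')\in E/V$ and not on the chosen lift, because $\langle v,V\rangle=0$ by isotropy; so this expression is exactly the pairing $\langle v,f(v')\rangle$ appearing in the statement (via the induced element of $(E/V)^*$ from the preceding remark). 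Using the skew-symmetry of $\form$, $\langle\widetilde{f(v)},v'\rangle=-\langle v',\widetilde{f(v)}\rangle=-\langle v',f(v)\rangle$, so the equation becomes
$$\langle v,f(v')\rangle=\langle v',f(v)\rangle\qquad\text{for all }v,v'\in V,$$
which is precisely the assertion that the bilinear form $(v,v')\mapsto\langle v,f(v')\rangle$ on $V$ is symmetric, i.e. belongs to $S^2(V)^*$. This establishes the desired identification.

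I do not expect a serious obstacle here; this is essentially a bookkeeping computation. The one point requiring a little care is the well-definedness issue — checking that $\langle v,\widetilde{f(v')}\rangle$ is independent of the lift of $f(v')$, which hinges on isotropy of $V$ — and, if one wants to be fully rigorous, verifying that the scheme-theoretic tangent space (cut out by the isotropy equations inside $\Gr(k,E)$) really is computed by this linearization rather than merely contained in it; but since the equations are the obvious quadratic ones and we are taking all of their first-order consequences, the dimension count matches and no subtlety arises. I would therefore present the computation over dual numbers as above and conclude.
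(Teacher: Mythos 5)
Your proposal is correct and is essentially the same argument as the paper's: both compute over the dual numbers, use isotropy of $V$ to kill the $\epsilon^0$ term (and to make $\langle v, f(v')\rangle$ well-defined on $E/V$), and read off from the $\epsilon^1$ term that the condition is symmetry of $(v,v')\mapsto\langle v,f(v')\rangle$. Your version merely makes the choice of lifts and the well-definedness check explicit, which the paper leaves implicit.
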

\begin{proof}
  The tangent space, $T_VGr(k,E)$, of the Grassmannian $Gr(k,E)$ can be identified with $\ho(V,E/V)$.

  By first-order infinitesimal calculation, $f$ is in the tangent space of the symplectic Grassmannian if and only if $\fa v_1,v_2\in V$,

  $\begin{aligned}
   0&=\langle v_1+\ep f(v_1),v_2+\ep f(v_2)\rangle=\langle v_1,\ep f(v_2)\rangle+\langle \ep f(v_1),v_2\rangle\\
    &=\langle v_1,\ep f(v_2)\rangle-\langle v_2,\ep f(v_1)\rangle\\
    &=\ep (\langle v_1,f(v_2)\rangle-\langle v_2,f(v_1)\rangle)
  \end{aligned}$

  This concludes the proof.
\end{proof}
\begin{cor}
The tangent space at $[V]$ of $MSG(k,E;\form_1,...,\form_m)$ can be identified with the subspace of elements in $\ho(V,E/V)$
which are simultaneously symmetric with respect to $\form_1,...,\form_m$.
\end{cor}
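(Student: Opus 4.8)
The claim is essentially immediate from the preceding Lemma, and the plan is to make the two small bookkeeping steps explicit. First I would observe that, straight from the definition, $MSG(k,E;\form_1,\dots,\form_m)$ is the scheme-theoretic intersection inside $Gr(k,E)$ of the symplectic Grassmannians $MSG(k,E;\form_t)$, $t=1,\dots,m$: a $k$-plane is simultaneously isotropic for all of the $\form_t$ exactly when it is isotropic for each one, and the ideal cutting it out of $Gr(k,E)$ is the sum of the ideals defining the individual isotropy conditions. Next I would invoke the general fact that, for a point $[V]$ on a scheme-theoretic intersection $X_1\cap\cdots\cap X_m$ of closed subschemes of an ambient scheme $Y$, the Zariski tangent space satisfies $T_{[V]}(X_1\cap\cdots\cap X_m)=\bigcap_{t} T_{[V]}X_t$ inside $T_{[V]}Y$, since a tangent vector annihilates the ideal $\sum_t I_t$ if and only if it annihilates every $I_t$.

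Applying this with $Y=Gr(k,E)$, whose tangent space at $[V]$ is identified with $\ho(V,E/V)$, and with $X_t=MSG(k,E;\form_t)$, the Lemma identifies each $T_{[V]}X_t$ with the set of $f\in\ho(V,E/V)$ that are symmetric with respect to $\form_t$, i.e.\ those satisfying $\langle v_1,f(v_2)\rangle_t=\langle v_2,f(v_1)\rangle_t$ for all $v_1,v_2\in V$. Intersecting over $t=1,\dots,m$ then yields precisely the elements of $\ho(V,E/V)$ simultaneously symmetric with respect to $\form_1,\dots,\form_m$, which is the assertion.

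Alternatively, and avoiding any appeal to the behaviour of tangent spaces under intersection, one can simply repeat the first-order calculation from the Lemma's proof with all $m$ forms at once: the deformation $[V+\ep f]$ is isotropic over $K[\ep]/(\ep^2)$ with respect to $\form_t$ if and only if $\langle v_1,f(v_2)\rangle_t-\langle v_2,f(v_1)\rangle_t=0$ for all $v_1,v_2\in V$, and demanding this for each $t$ gives exactly the stated description. Either way there is no substantive obstacle; the one point worth a moment's care is confirming that $MSG(k,E;\form_1,\dots,\form_m)$ really carries the scheme structure of the intersection, so that its tangent space is the intersection of the $T_{[V]}X_t$ and not something larger — but this is exactly how the multiply symplectic Grassmannian was defined, via the simultaneous isotropy equations.
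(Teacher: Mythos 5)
Your proposal is correct and matches the paper, which states this as an immediate corollary of the preceding Lemma without further proof; your two routes (tangent space of a scheme-theoretic intersection equals the intersection of tangent spaces, or redoing the first-order computation for all $m$ forms at once) are exactly the intended justification. No gaps.
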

The condition that $f\in \ho(V,E/V)$ is symmetric with respect to $\form_t$ imposes $\binom{k}{2}$ many linear conditions on
$\ho(V,E/V)$. Fixing a basis $v_1,...,v_k$ for $V$ the conditions can be described as follows:
$$\langle v_i,f(v_j)\rangle_t-\langle v_j,f(v_i)\rangle_t=0,\fa i<j.$$
Thus, the dimension of $T_VMSG(k,E;\form_1,...,\form_m)$ is at least $k(n-k)-m\binom{k}{2}$.

Define $g^t_{i,j}=\langle v_i,(\cdot)(v_j)\rangle_t-\langle v_j,(\cdot)(v_i)\rangle_t\in\ho(V,E/V)^*$, $\fa i<j$. To study when
the above expected dimension is obtained is then tantamount to understanding when $g^t_{i,j}$ are independent. We shall
describe and discuss a necessary and sufficient condition in the next section.

Throughout the note, $\ff$ will denote a set of finitely many symplectic forms, and $m$ always denotes the number of forms in
it.
\section{A New Perspective on the Problem}
Recall the definition of $g^t_{i,j}$ from previous section. Denote $\Omega=\s(\form_1,..,\form_m)$. The set of elements
$\{g^t_{i,j}|t=1,...,m;i<j\}$ in $\ho(V,E/V)^*$ naturally corresponds to a linear map $\Phi:\Lambda^2V\ot\Omega\to
\ho(V,E/V)^*$ sending $(v\wedge v')\ot\form$ to $\lag v,(\cdot)(v')\rag-\lag v',(\cdot)(v)\rag$ and extending by linearity.
Obviously, $T_VMSG(k,E;\ff)$ is of expected dimension $k(n-k)-m\binom{k}{2}$ if and only if $\Phi$ is injective. We give an
equivalent formulation of this condition:
\begin{prop}\label{main3}
  Define $j_V:V\otimes\Omega\to(E/V)^*:v\otimes\form\mapsto\lag v,\rag$. $\Phi:\Lambda^2V\ot\Omega\to \ho(V,E/V)^*$ fails to be
  injective if and only if $\exists \psi:V^*\to V\ot\Omega\cong \ho(V^*,\Omega)$ not zero such that $\psi(v)(v)=0$
  for all $v\in V^*$ and $j_V\circ\psi=0$.
\end{prop}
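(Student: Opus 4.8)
The plan is to make every canonical identification in sight explicit and then reduce the vanishing of $\Phi$ on a tensor to an identity that can be checked on rank-one homomorphisms. Using the canonical isomorphism $\ho(V^*,W)\cong V\otimes W$ for a finite-dimensional $V$, a linear map $\psi\colon V^*\to V\otimes\Omega$ is nothing but a tensor in $V\otimes V\otimes\Omega$, and the condition $\psi(v)(v)=0$ for all $v\in V^*$ says precisely that the image of this tensor in $S^2V\otimes\Omega$ under symmetrization of the two $V$-factors vanishes: the map $v\mapsto\psi(v)(v)$ is exactly the associated $\Omega$-valued quadratic form on $V^*$. As we are in characteristic $\ne 2$, vanishing of this quadratic form is the same as vanishing of the symmetric part, so the space of admissible $\psi$ is canonically $\Lambda^2V\otimes\Omega$ — the very space on which $\Phi$ is defined. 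It therefore suffices to prove that, under this identification, $\Phi(\psi)=0$ if and only if $j_V\circ\psi=0$; the nonzero $\psi$ on both sides then correspond, and the proposition follows.

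For the computation I would write $\psi=\sum_i(a_i\wedge b_i)\otimes\omega_i$ with $a_i,b_i\in V$ and $\omega_i\in\Omega$, so that as a map $V^*\to V\otimes\Omega$ (with the convention $a\wedge b=a\otimes b-b\otimes a$) one has $\psi(\xi)=\sum_i\bigl(\xi(a_i)b_i-\xi(b_i)a_i\bigr)\otimes\omega_i$, and then evaluate the functional $\Phi(\psi)\in\ho(V,E/V)^*$ on a general rank-one homomorphism $f=\xi\otimes e$, $\xi\in V^*$, $e\in E/V$, for which $f(v)=\xi(v)\,e$. Unwinding the definition of $\Phi$ and using that the isotropy of $V$ lets $\langle a,\cdot\rangle_{\omega}$ descend to a functional on $E/V$, one gets
$$\Phi(\psi)(\xi\otimes e)=\sum_i\bigl(\xi(b_i)\,\langle a_i,e\rangle_{\omega_i}-\xi(a_i)\,\langle b_i,e\rangle_{\omega_i}\bigr)=-\bigl(j_V(\psi(\xi))\bigr)(e),$$
the last equality being merely the definitions of $j_V$ and of the homomorphism attached to $\psi$. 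Since rank-one homomorphisms span $\ho(V,E/V)$, this identity shows that $\Phi(\psi)=0$ precisely when $j_V(\psi(\xi))=0$ in $(E/V)^*$ for every $\xi\in V^*$, i.e. precisely when $j_V\circ\psi=0$.

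Putting the two steps together yields the equivalence: $\Phi$ fails to be injective exactly when there is a nonzero $\psi\in\Lambda^2V\otimes\Omega$ with $\Phi(\psi)=0$, which by the first step is exactly a nonzero $\psi\colon V^*\to V\otimes\Omega$ with $\psi(v)(v)=0$ for all $v$, and by the second step the constraint $\Phi(\psi)=0$ translates into $j_V\circ\psi=0$. I do not expect a genuine obstacle here; the points that need care are the first step — verifying that the single quadratic constraint $\psi(v)(v)=0$ really cuts out the skew-symmetric summand $\Lambda^2V\otimes\Omega$ and nothing larger — and bookkeeping the place where isotropy enters, namely exactly when $\langle a_i,\cdot\rangle_{\omega_i}$ is read as an element of $(E/V)^*$ rather than of $E^*$.
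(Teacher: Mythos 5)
Your proposal is correct and follows essentially the same route as the paper: both identify the admissible $\psi$ (those with $\psi(v)(v)=0$ for all $v\in V^*$) with $\Lambda^2V\otimes\Omega$ inside $V\otimes V\otimes\Omega\cong\ho(V^*,V\otimes\Omega)$, and then observe that under the canonical identifications $\Phi$ becomes $\psi\mapsto j_V\circ\psi$ (up to a sign, which is irrelevant for the kernel). Your explicit evaluation on rank-one homomorphisms $\xi\otimes e$ is just a spelled-out verification of the step the paper states without computation.
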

\begin{proof}
First, identify $\Lambda^2V$ with the subspace of elements in $(V^*\ot V^*)^*$ sending $v\ot v$ to zero (for all $v\in  V^*$) via the map $v_1\wedge v_2\mapsto (f_1\ot f_2\mapsto f_1(v_1)f_2(v_2)-f_1(v_2)f_2(v_1))$. Under the natural isomorphism $(V^*\ot V^*)^*\cong V\ot V\cong \ho(V^*,V)$, we then have the following identification:
  $$\begin{aligned}
    &\Lambda^2V\ot\Omega\cong\{f\in\ho(V^*,V)|v(f(v))=0,\fa v\in V^*\}\otimes\Omega\\
    &\cong\{\psi\in\ho(V^*,V\otimes\Omega)|\psi(v)(v)=0\in\Omega,\fa v\in V^*\}
  \end{aligned}$$
  
(In the last step, we also identify $\ho(V^*,V\otimes\Omega)$ with $\ho(V^*,\ho(V^*,\Omega))$.)

With this identification, $\Phi$ can be thought of as a map sending $\psi$ to $j_V\circ\psi$. Here, we identify the following vector spaces: $\ho(V^*,(E/V)^*)\cong V^{**}\ot(E/V)^*\cong (V^*\ot(E/V))^*\cong \ho(V,E/V)^*$.

Hence, the result follows.
\end{proof}

We are now ready to prove Theorem \ref{prop:tech} as a corollary of Proposition \ref{main3}.
\begin{proof}[Proof of Theorem \ref{prop:tech}]
The ``if'' direction is trivial, since if there exists a 2-dimensional subspace $V'$ of $V$ and a non-zero form $\form\in\Omega$ such that $\form|_{V'\times E/V}=0$, take two independent vectors $v,v'$ in $V'$, then $v\wedge v'\ot\form$ is in the kernel of $\Phi$ (as in \ref{main3}).

We now prove the ``only if'' direction. In view of Proposition \ref{main3}, we will show the following statement: if there exists $\psi:V^*\to \ho(V^*,\Omega)$ not zero such that $\psi(v)(v)=0$ for all $v\in V^*$ and $j_V\circ\psi=0$, then there exists a 2-dimensional subspace $V'$ of $V$ such that $\form|_{V'\times E/V}=0$.

Consider a non-zero linear map $\psi:V^*\to
V\ot\Omega$ such that $j_V\circ\psi=0$. Fixing a basis of $V$, $\psi$ can be presented as $M_1\ot\form_1+M_2\ot\form_2$, where
$M_1,M_2$ are two anti-symmetric matrices and $\form_1,\form_2$ are the two symplectic forms spanning $\Omega$.

By Thm XV.8.1 in \cite{L}, up to a choice of basis $\{v_1,...,v_k\}$ for $V$, one can assume that $\rk(M_1)=2m\ge2$
($\psi\neq0$) and $M_1$ is block diagonal, where diagonal blocks are either of the form $\begin{bmatrix}0&1\\-1&0\end{bmatrix}$
or are zeros.

Denote $M_1=(a_{i,j}),M_2=(b_{i,j})$. Then, $j_V\circ\psi(v_i^*)=\lag v^1_i,\cdot\rag_1+\lag v^2_i,\cdot\rag_2$, where $v^1_i=\sum_j
a_{i,j}v_j,v^2_i=\sum_j b_{i,j}v_j$.

If the $i$-th row of $M_1$ is zero, so is the $i$-th row of $M_2$; otherwise one gets $\lag v^2_i,\cdot\rag_2=0$ for some
non-zero $v^2_i$, which violates the assumption that $\form_2$ is non-degenerate. Therefore, by replacing $V$ with $\s(\{v_i\}_{i=1}^{2m})$ (such that the $i$-th row of $M_1$ is non-zero) if necessary, one can assume that $M_1,M_2$
are both non-degenerate of the same (even) rank. In this case, there is an element $GL(V)$ sending $v^1_i$ to $v^2_i$. Under
the fixed choice of basis, this linear map is presented by the matrix $M_2M_1^{-1}$. Let $d\neq0$ be an eigenvalue of $M_2M_1^{-1}$.
We claim the eigenspace associated to $d$ is even-dimensional.

Indeed, to solve $(dI_k-M_2M_1^{-1})\vec{X}=0$ is equivalent to solving $(dM_2^{-1}-M_1^{-1})\vec{X}=0$, where
$dM_2^{-1}-M_1^{-1}$ is a singular $k\times k$ anti-symmetric matrix. By Thm XV.8.1 in \cite{L} again, one can conclude $dM_2^{-1}-M_1^{-1}$ has even rank. Since we
started with the assumption $\dim V$ is even, one can conclude that the solution space of $(dM_2^{-1}-M_1^{-1})\vec{X}=0$ is
even-dimensional. This proves the claim.

Therefore, there exists $0\neq d\in K$ such that $\form_1+d\form_2$ vanishes on $V'\times E/V$, where $V'\subset V$ is some
non-zero even-dimensional subspace of $V$.

We have thus proved Theorem \ref{prop:tech}.
\end{proof}

In order to study rank two Brill-Noether loci with special determinant in general, we further re-formulate the condition in \ref{main3}
as follows:
\begin{prop}\label{main2}
Let $\Omega$ be a $K$-vector space spanned by $m$ independent symplectic forms $\form_1,...,\form_m$ on an $n$-dimensional
vector space $E$. Let $V$ be a $k$-dimensional subspace of $E$ simultaneously isotropic with respect to $\form_1,...,\form_m$. Then, there exists a non-zero map $\psi:V^*\to V\ot\Omega$ such that
$\psi(v)(v)=0$ for all $v\in V^*$ and $j_V\circ\psi=0$ ($j_V$ as in \ref{main3}), if and only if there exists a basis $\{v_1,...,v_k\}$ of $V$ and $\form_{ij}\in\Omega$ (for all $i<j$) not all zero such that 
\begin{equation}\label{m2}
W=\s(\{\sum_{i:j>i}(-v_i\ot\form_{ij})+\sum_{i:j<i}(v_i\ot\form_{ji})|j=1,...,k\}),\end{equation}
is a subspace of $\ker(j_V)$. 
\end{prop}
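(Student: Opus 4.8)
The plan is to unwind both sides into explicit matrix/coordinate language and observe that they are literally the same data written two ways. Recall from the proof of Proposition \ref{main3} that a non-zero $\psi\colon V^*\to V\ot\Omega$ satisfying $\psi(v)(v)=0$ for all $v\in V^*$ corresponds, after fixing a basis $\{v_1,\dots,v_k\}$ of $V$ (and the dual basis of $V^*$), to a tuple of anti-symmetric $k\times k$ matrices $(M_1,\dots,M_m)$, one for each form $\form_t$, via $\psi(v_j^*)=\sum_{i,t}(M_t)_{ij}\,v_i\ot\form_t$. Equivalently, writing $\form_{ij}:=\sum_t (M_t)_{ij}\form_t\in\Omega$, the anti-symmetry $(M_t)_{ij}=-(M_t)_{ji}$ is exactly the condition $\form_{ij}=-\form_{ji}$, so the data of such a $\psi$ is the same as the data of elements $\form_{ij}\in\Omega$ for $i<j$ (not all zero), and then $\psi(v_j^*)=\sum_{i<j}(-v_i\ot\form_{ij})+\sum_{i>j}(v_i\ot\form_{ji})$ — which is precisely the $j$-th listed generator of $W$ in \eqref{m2}.

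With that dictionary in hand, the argument is a short two-way implication. First I would spell out $j_V\circ\psi=0$: since $j_V\colon V\ot\Omega\to (E/V)^*$ sends $v\ot\form\mapsto\lag v,\cdot\rag$, the composite kills $\psi$ iff $j_V$ kills $\psi(v_j^*)$ for every $j$, i.e. iff each of the $k$ generators of $W$ lies in $\ker(j_V)$, i.e. iff $W\subseteq\ker(j_V)$. For the forward direction, given a non-zero $\psi$ with $j_V\circ\psi=0$, choose any basis of $V$, read off the $\form_{ij}$ as above; they are not all zero because $\psi\neq 0$, and $W$ as defined in \eqref{m2} is contained in $\ker(j_V)$ by the computation just made. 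Conversely, given a basis $\{v_i\}$ and elements $\form_{ij}\in\Omega$ ($i<j$) not all zero with $W\subseteq\ker(j_V)$, define $\psi$ by declaring $\psi(v_j^*)$ to be the $j$-th generator of $W$ and extending linearly; set $\form_{ji}:=-\form_{ij}$ so that the coefficient of $v_i\ot(\cdot)$ in $\psi(v_j^*)$ is anti-symmetric in $(i,j)$, which is exactly the statement that the associated matrices $M_t$ are anti-symmetric, hence that $\psi(v)(v)=0$ for all $v\in V^*$ (this is the identification of $\Lambda^2V\ot\Omega$ inside $\ho(V^*,V\ot\Omega)$ used in Proposition \ref{main3}). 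The map $\psi$ is non-zero since the $\form_{ij}$ are not all zero, and $j_V\circ\psi=0$ since $W\subseteq\ker(j_V)$.

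The only point requiring genuine care — and the step I would expect to be the main obstacle — is the bookkeeping of the anti-symmetrization, namely checking that "$\psi(v)(v)=0$ for all $v\in V^*$" is equivalent to "the generator-coefficients $\form_{ij}$ extend anti-symmetrically." This is where one must use the identification, established in the proof of Proposition \ref{main3}, of $\Lambda^2V\ot\Omega$ with $\{\psi\in\ho(V^*,V\ot\Omega)\mid\psi(v)(v)=0\}$: under that identification an element of $\Lambda^2 V\ot\Omega$ written in the basis as $\sum_{i<j} v_i\w v_j\ot\form_{ij}$ has $\psi(v_j^*)=\sum_{i\neq j}\form_{ij}v_i\ot(\cdot)$ with the convention $\form_{ij}=-\form_{ji}$, and conversely every anti-symmetric family arises this way. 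Once this translation is pinned down, the two conditions in the statement are manifestly the $\ho(V^*,V\ot\Omega)$-side and the $\Lambda^2V\ot\Omega$-side of one and the same object, and the equivalence is immediate. I would therefore structure the proof as: (i) recall the identification and the matrix description; (ii) observe $j_V\circ\psi=0\iff W\subseteq\ker(j_V)$; (iii) conclude both directions by passing between $\psi$ and its generators.
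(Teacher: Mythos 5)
Your proof is correct and takes essentially the same approach as the paper's: fix a basis, encode $\psi$ as a family of alternating matrices $(M_t)$, set $\form_{ij}=\sum_t (M_t)_{ij}\form_t$, and observe that the listed generators of $W$ are exactly the images $\psi(v_j^*)$, so $j_V\circ\psi=0$ is literally the condition $W\subseteq\ker(j_V)$. The paper is terser about the converse (it just says to reverse the argument), but the content is identical.
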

\begin{proof}
  Suppose $j_V\circ \psi=0$. One can write $\psi=\dis\sum_{s=1}^m \psi_s\ot\form_s$, where $\psi_s\in\ho(V^*,V)$ for all $s$ and $\psi_s(v)(v)=0$ for all $v\in V^*$. (We identify $V$ with $V^{**}$ in the natural way.) As in \ref{main3}, each $\psi_s$ can be identified with an alternating form on $V$. Fix a basis $\{v_1,...,v_k\}$ of $V$, one may represent $\psi_s$ by an alternating matrix $(a^i_{j,s})$. In particular, $\fa f\in V^*$, $\psi_s(f)=\sum_{j=1}^kf(\dis\sum_{i<j}-a^i_{j,s}v_i+\dis\sum_{i>j}a^j_{i,s}v_i)v_j$.
  
For all $i<j$, let $\form_{ij}=a^i_{j,1}\form_1+...+a^i_{j,m}\form_m$. One can then check 
$$\sum_{i:j>i}(-v_i\ot\form_{ij})+\sum_{i:j<i}(v_i\ot\form_{ji})=\sum_{s=1}^m(\dis\sum_{i<j}-a^i_{j,s}v_i+\dis\sum_{i>j}a^j_{i,s}v_i)\ot\form_s$$
is inside $\ker(j_V)$, for $j=1,...,k$.
 
The ``if'' part of the statement can be verified by reversing the direction of the above argument.
\end{proof}
\section{Application to the Rank Two Brill-Noether Problem}\label{mainsection}
We now apply the result in the previous section to the rank two Brill-Noether problem with fixed special determinant.

The connection between the the local geometry of the multiply symplectic Grassmannian and rank two Brill-Noether theory was established in \cite{Ofix}. Let $\mE$ be a rank two vector bundle on a smooth curve $C$ with determinant $\mL$ such that $h^1(C,\mL)=m>0$. Let $\phi_1,...,\phi_m$ be $m$ linearly independent maps $\mL\to\omega_C$ spanning $H^0(C,\omega_C\ot\mL^{-1})\cong H^1(C,\mL)^*$. They induce $m$ symplectic forms on $\ti{\mE}:=\mE(D)/\mE(-D)$, where $D$ is a certain ample divisor:
\begin{lem}
  Given $\phi_i:\mL\to\omega_C$ ($i=1,...,m$), $m$ linearly independent morphisms, suppose $D$ is an ample divisor supported away from the vanishing locus of $\phi_i$. Denote $\ti{\mE}:=\mE(D)/\mE(-D)$. Then,
  $$\lag,\rag_i:H^0(\ti{\mE})\times H^0(\ti{\mE})\to \mO_C:\lag s_1,s_2\rag_i=\sum_{P\in C}\res_P\phi_i(\ti{s}_{1,P}\wedge\ti{s}_{2,P})$$
  defines $m$ linearly independent symplectic forms on $H^0(\ti{\mE})$, where $\ti{s}_{i,P}$ is a representative in $\mE(D)$ of $s_i$ near $P$.

  When $D$ is sufficiently ample, $H^0(\mE(D))$ is isotropic with respect to $\form_i$.
\end{lem}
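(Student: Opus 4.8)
The plan is to reduce everything to a local computation at the finitely many points of $\supp(D)$. Since $\ti{\mE}=\mE(D)/\mE(-D)$ is a torsion sheaf supported on $\supp(D)$, there is a canonical decomposition $H^0(\ti{\mE})=\bigoplus_{P\in\supp(D)}\ti{\mE}_P$, and by construction each proposed form $\form_i$ is the sum of the local contributions indexed by the points $P$. First I would check well-definedness: if a local representative $\ti{s}_{1,P}\in\mE(D)_P$ of $s_1$ is changed by an element $\eta\in\mE(-D)_P$, then $\phi_i(\ti{s}_{1,P}\w\ti{s}_{2,P})$ changes by $\phi_i(\eta\w\ti{s}_{2,P})$, and since $\mE(-D)_P\w\mE(D)_P\subseteq\mL_P$ and $\phi_i$ is a morphism of sheaves, this lies in $\omega_{C,P}$, hence is regular and has zero residue at $P$. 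The same remark shows that the contribution at $P$ to $\lag s_1,s_2\rag_i$ vanishes whenever $s_1$ or $s_2$ is supported away from $P$, so $\form_i$ is block diagonal with respect to the decomposition above. Bilinearity is immediate, and $\lag s,s\rag_i=\sum_P\res_P\phi_i(\ti{s}_P\w\ti{s}_P)=0$, so each $\form_i$ is an alternating bilinear form on $H^0(\ti{\mE})$.

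Next I would prove that each $\form_i$ is non-degenerate; by block diagonality this reduces to non-degeneracy of the local pairing on each $\ti{\mE}_P$. The key input is the rank-two self-duality of $\mE$: the alternating pairing $\mE\ot\mE\to\w^2\mE=\mL$, $e\ot e'\mapsto e\w e'$, is perfect, so $\mE\cong\mE^\vee\ot\mL$. Since $P\notin\supp(\dv\phi_i)$ by hypothesis, $\phi_i\colon\mL\to\omega_C$ is an isomorphism near $P$, and composing gives perfect pairings $\mE_P\ot\mE_P\to\omega_{C,P}$ and hence $\mE(D)_P\ot\mE(D)_P\to\omega_C(2D)_P$. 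Passing to residues then realizes the pairing on $\ti{\mE}_P\cong(t^{-a}\mO_P/t^{a}\mO_P)^{\op 2}$ (for $t$ a local parameter and $a=\mathrm{ord}_P D$) as a hyperbolic form built from the standard perfect residue pairing $B(f,g)=\res_P(fg\,dt)$ on $t^{-a}\mO_P/t^{a}\mO_P$; this is non-degenerate, so each $\form_i$ is a symplectic form.

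For linear independence, suppose $\sum_{i=1}^{m}c_i\form_i=0$ and set $\phi=\sum_i c_i\phi_i\in\ho(\mL,\omega_C)$; I would show $\phi=0$. Unwinding the definition as above, and using that $\mE(D)_P\w\mE(D)_P$ generates $\mL(2D)_P$, the hypothesis forces every element of $\phi(\mL)(2D)_P$ to have vanishing residue at $P$, for each $P\in\supp(D)$. If $\phi\neq0$ then $\phi$ is injective, $\phi(\mL)=\omega_C(-\dv\phi)$, and $\dv\phi$ is effective of degree $\deg\omega_C-\deg\mL=2g-2-\deg\mL\geq0$. Taking $D$ sufficiently ample --- say with more than $2g-2-\deg\mL$ distinct points in its support, all chosen disjoint from the finitely many loci $\supp(\dv\phi_i)$ --- some $P\in\supp(D)$ avoids $\supp(\dv\phi)$, and there $\omega_C(2D-\dv\phi)_P=\omega_C(2D)_P$ contains $t^{-1}\,dt$, whose residue is non-zero: a contradiction. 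Hence $\phi=0$, and since $\phi_1,\dots,\phi_m$ are linearly independent, all $c_i=0$. Finally, for the isotropy statement, if $s_1,s_2\in H^0(\mE(D))$ then $s_1\w s_2$ is a \emph{global} section of $\mL(2D)$, so $\phi_i(s_1\w s_2)\in H^0(\omega_C(2D))$ is a global meromorphic differential and $\lag s_1,s_2\rag_i=\sum_P\res_P\phi_i(s_1\w s_2)=0$ by the residue theorem; once $D$ is ample enough that $H^0(\mE(-D))=0$, the natural map $H^0(\mE(D))\hookrightarrow H^0(\ti{\mE})$ is injective, so $H^0(\mE(D))$ is an isotropic subspace for every $\form_i$.

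The main difficulty is not any isolated deep step but the bookkeeping in the local analysis: one has to verify simultaneously that the pairing is independent of the chosen local lifts, that it is block diagonal over $\supp(D)$, and that each local block is perfect --- in effect a concrete instance of local Serre duality --- and one has to be careful to take $D$ ample enough (not merely disjoint from the $\dv\phi_i$), so that no nonzero linear combination $\phi$ of the $\phi_i$ can degenerate the pairing by vanishing along all of $\supp(D)$.
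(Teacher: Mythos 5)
Your proof is correct, and it is genuinely more informative than what the paper offers: the paper does not prove this lemma at all, but simply cites Lemmas 5.1 and 5.2 of \cite{Ofix}, whereas you give a self-contained argument. Your route --- decompose $H^0(\ti{\mE})=\bigoplus_{P\in\supp(D)}\ti{\mE}_P$, check well-definedness and block-diagonality by showing that ambiguities land in $\omega_{C,P}$ and hence have zero residue, reduce non-degeneracy to the perfectness of the local residue pairing on $t^{-a}\mO_P/t^{a}\mO_P$ composed with the self-duality $\mE\ot\mE\to\mL$ and the fact that $\phi_i$ is an isomorphism near $P$, and get isotropy of $H^0(\mE(D))$ from the global residue theorem applied to $\phi_i(s_1\w s_2)\in H^0(\omega_C(2D))$ --- is the standard one and is almost certainly what the cited lemmas do. One point you handle that the lemma's literal hypotheses do not: linear independence of the $\form_i$ can genuinely fail for an ample $D$ of small degree, since a nonzero combination $\phi=\sum c_i\phi_i$ (whose divisor need not avoid $\supp(D)$ even though each $\dv\phi_i$ does) kills the form exactly when $\mathrm{ord}_P(\dv\phi)\ge 2\,\mathrm{ord}_P(D)$ for every $P\in\supp(D)$, which forces $2\deg D\le 2g-2-\deg\mL$. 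Your requirement that $D$ contain more than $2g-2-\deg\mL$ distinct points rules this out and is automatically satisfied by the divisor the paper actually fixes ($D$ reduced with $\deg D>2g-2-d+M$), so your added precision is consistent with, and arguably a needed clarification of, the statement as used.
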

\begin{proof}
  See Lemma 5.1, 5.2 in \cite{Ofix}.
\end{proof}
Hereafter, fix some $\phi\in H^0(C,\mL^*\ot\omega)$ and denote $Z=\dv(\phi)$. Also fix a reduced effective divisor $D=P_1+P_2+...+P_{\deg(D)}$ such that $\phi$ does not vanish on any $P_t$ and that $\deg(D)>2g-2-d+M$, where $M$ is some upper bound for degrees of sub-line bundles of $\mE$.

By Proposition \ref{main2}, for any $k$-dimensional subspace $V\subset H^0(\mE)$,
the tangent space $T_V\M(k,H^0(C,\ti{\mE}))$ has codimension less than $m\binom{k}{2}$ inside $T_V\Gr(k,H^0(C,\ti{\mE}))$ if and only if there exists a basis $s_1,...,s_k$ of $V$ and $\form_{ij}\in\Omega$ not all zero such that
$$-\sum_{i:j>i}(\lag s_i,\cdot\rag_{ij})+\sum_{i:j<i}(\lag s_i,\cdot\rag_{ji})=0\in H^0(C,\ti{\mE})^*,\text{ for }j=1,...,k.$$
Written in terms of residues, one gets:
\begin{equation}\label{res1}-\sum_{i:j>i}\sum_{P\in C}\res_P\phi_{ij}(s_{i}\wedge\cdot)+\sum_{i:j<i}\sum_{P\in C}
\res_P\phi_{ji}(s_{i}\wedge\cdot)=0\in H^0(C,\ti{\mE})^*.\end{equation}
Here, $\phi_{ij}$ are elements in $H^0(C,\mL^*\ot\omega)=\ho(\mL,\omega)$ which are not all zero.

Write $\phi_{ij}=f_{ij}\phi$, for some rational function $f_{ij}$ on $C$ such that $(f_{ij})+Z\ge0$. Equation \eqref{res1} can be further written as
\begin{equation}\label{res2}\sum_{P\in C}\res_P\phi((-\sum_{i:j>i}f_{ij}s_i+\sum_{i:j<i}f_{ji}s_i)\wedge\cdot)=0\in H^0(C,\ti{\mE})^*,
j=1,2,...,k.\footnote{$\phi$ naturally induces a morphism $\mL(Z+D)\to\omega(Z+D)$.}\end{equation}

Notice that $-\sum_{i:j>i}f_{ij}s_i+\sum_{i:j<i}f_{ji}s_i\in\Gamma(C,\mE(Z))$ and by our assumption $\supp(Z)\cap\supp(D)=\emptyset$ for all $P_t\in\supp(D)$. Therefore, $-\sum_{i:j>i}f_{ij}s_i+\sum_{i:j<i}f_{ji}s_i$ is regular at $P_t$, for all $t$. Consider $s'\in H^0(C,\ti{\mE})$ only supported at $P_t$. Equation \eqref{res2} then
implies
\begin{equation}\label{res3}\res_{P_t}\phi((-\sum_{i:j>i}f_{ij}s_{i}+\sum_{i:j<i}f_{ji}s_{i})\wedge s')=0, j=1,2,...,k\end{equation}
One then concludes that $-\sum_{i:j>i}f_{ij}s_{i}+\sum_{i:j<i}f_{ji}s_{i}$ vanishes at all $P_t$, i.e.
$$-\sum_{i:j>i}f_{ij}s_{i}+\sum_{i:j<i}f_{ji}s_{i}\in\Gamma(C,\mE(Z-D)).$$
As a section of $\mE(Z)$, if $-\sum_{i:j>i}f_{ij}s_{i}+\sum_{i:j<i}f_{ji}s_{i}$ is non-zero, it has at most $2g-2-d+M$ many zeros, where $M$ is the upper bound for degrees of sub-line bundles of $\mE$ chosen before. Since $\deg(D)>2g-2-d+M$ and $-\sum_{i:j>i}f_{ij}s_i+\sum_{i:j<i}f_{ji}s_i$ vanishes on $\supp(D)$, it follows that
\begin{equation}\label{main5}
  -\sum_{i:j>i}f_{ij}s_i+\sum_{i:j<i}f_{ji}s_i=0.
\end{equation}
We have thus arrived at the following result:
\begin{prop}\label{main6}
Denote
$$m:H^0(C,\mL^*\ot\omega)\ot H^0(C,\mE)\to H^0(C,\mL^*\ot\mE\ot\omega)\cong H^0(C,\mE^*\ot\omega)$$
to be the natural multiplication map. Then, \\
$T_V\M(k,H^0(C,\ti{\mE}))$ has codimension less than $m\binom{k}{2}$ inside $T_V\Gr(k,H^0(C,\ti{\mE}))$ if and only if there exists a basis $\{s_1,..,s_k\}$ of $V$ and $\phi_{ij}\in H^0(C,\mL^*\ot\omega)$ ($\fa i<j$) not all zero such that $-\sum_{i:j>i}\phi_{ij}\ot s_i+\sum_{i:j<i}\phi_{ji}\ot s_i\in\ker(m)$ for all $j$.
\end{prop}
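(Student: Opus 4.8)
The plan is to transport Proposition \ref{main2} into the geometric setting set up above, taking $\Omega=H^0(C,\mL^*\ot\omega)$ acting on $H^0(C,\ti{\mE})$ through the residue symplectic forms $\lag,\rag_i$, and then to convert the resulting vanishing statement in $H^0(C,\ti{\mE})^*$ into an honest vanishing of sections of $\mE$. By Proposition \ref{main2} (applied with $V$ isotropic, which holds by the Lemma once $D$ is sufficiently ample), the tangent space $T_V\M(k,H^0(C,\ti{\mE}))$ has codimension less than $m\binom{k}{2}$ precisely when there are a basis $s_1,\dots,s_k$ of $V$ and forms $\form_{ij}\in\Omega$, not all zero, with
$$-\sum_{i:j>i}\lag s_i,\cdot\rag_{ij}+\sum_{i:j<i}\lag s_i,\cdot\rag_{ji}=0\in H^0(C,\ti{\mE})^*,\qquad j=1,\dots,k,$$
the subscript $ij$ recording $\form_{ij}$. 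First I would make the identification $\form_{ij}\leftrightarrow\phi_{ij}\in H^0(C,\mL^*\ot\omega)$ explicit, rewrite the displayed identity in terms of residues to get \eqref{res1}, and factor $\phi_{ij}=f_{ij}\phi$ with $Z=\dv(\phi)$ and $(f_{ij})+Z\ge 0$, arriving at \eqref{res2}.

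The heart of the argument is to upgrade \eqref{res2} to the vanishing of the rational section $\sigma_j:=-\sum_{i:j>i}f_{ij}s_i+\sum_{i:j<i}f_{ji}s_i\in\Gamma(C,\mE(Z))$. Since $\supp(Z)\cap\supp(D)=\es$, each $\sigma_j$ is regular along $D$; testing \eqref{res2} against sections $s'\in H^0(C,\ti{\mE})$ supported at a single $P_t$ and using that $\phi$ is nonzero at $P_t$ isolates $\res_{P_t}\phi(\sigma_j\wedge s')=0$ for all such $s'$, which forces $\sigma_j(P_t)=0$; hence $\sigma_j\in\Gamma(C,\mE(Z-D))$. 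A degree count then finishes this step: a nonzero section of the rank-two bundle $\mE(Z)$ spans a sub-line bundle of degree at most $M+\deg Z=2g-2-d+M$, so it has at most that many zeros, whereas $\sigma_j$ vanishes on all of $D$ with $\deg D>2g-2-d+M$; therefore $\sigma_j=0$. Reading $\sigma_j=0$ through multiplication by $\phi$, which identifies $\Gamma(\mE(Z))$ with $H^0(\mE\ot\mL^*\ot\omega)$, says exactly that $-\sum_{i:j>i}\phi_{ij}\ot s_i+\sum_{i:j<i}\phi_{ji}\ot s_i\in\ker(m)$, and the $\phi_{ij}$ are not all zero because the $\form_{ij}$ are not. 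For the converse I would reverse this chain: if each such combination lies in $\ker(m)$ then the corresponding $\sigma_j$ vanishes, so \eqref{res1} holds trivially, and Proposition \ref{main2} together with Proposition \ref{main3} returns the codimension bound.

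The step I expect to demand the most care is the degree estimate, together with the a priori setup it relies on: one must fix in advance a bound $M$ on the degrees of sub-line bundles of $\mE$ and choose $D$ ample enough that $H^0(\mE(D))$ is simultaneously isotropic for all the $\lag,\rag_i$ and $\deg D>2g-2-d+M$, so that ``$\sigma_j$ has too many zeros'' really is a contradiction. The other point to handle carefully is the local detection argument --- that at each $P_t$ the residue pairing is nondegenerate enough to recover the value $\sigma_j(P_t)$ from its pairings against sections of $\ti{\mE}$ with poles at $P_t$ --- which is exactly where $\phi(P_t)\neq 0$ enters.
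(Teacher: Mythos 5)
Your proposal follows the paper's own derivation essentially step for step: apply Proposition \ref{main2} with $\Omega=H^0(C,\mL^*\ot\omega)$ via the residue forms, rewrite as \eqref{res1}, factor $\phi_{ij}=f_{ij}\phi$ to get \eqref{res2}, test against sections supported at the points of $D$ (where $\phi\neq 0$) to force vanishing along $D$, and then use the degree bound $\deg D>2g-2-d+M$ to conclude $\sigma_j=0$, which is exactly membership in $\ker(m)$; the converse is the same trivial reversal the paper uses. The argument is correct and the points you flag as delicate (the choice of $M$ and $D$, and the nondegeneracy of the local residue pairing at each $P_t$) are precisely the ones the paper relies on.
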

Therefore, we would like to draw useful information about $\ker(m)$.

The following observation is crucial for our analysis: the multiplication map $m$ is related to the classical Petri map via the following commutative diagram:
$$\xymatrix{H^0(C,\mL^*\ot\omega)\ot H^0(C,\mE)\ot H^0(C,\mE)\ar[r]^-{\Id\ot\det}\ar[d]_{m\ot\Id}&H^0(C,\mL^*\ot\omega)\ot H^0(C,\mL)\ar[d]^{\text{Petri}}\\H^0(C,\mE^*\ot\omega)\ot H^0(C,\mE)\ar[r]& H^0(C,\omega)}$$
Let $\sum f_i\ot s_i$ be an element in $\ker(m)$, then for any $t\in H^0(C,\mE)$, $(\sum f_i\ot s_i)\ot t$ is sent to zero in $H^0(C,\omega)$ along either direction of the commutative diagram. In particular, $\sum f_i\ot (s_i\wedge t)$ is in the kernel of the Petri map. By the Gieseker-Petri Theorem (\cite{Gieseker}), over a general curve $C$, the Petri map is injective and hence  $\sum f_i\ot (s_i\wedge t)=0\in H^0(C,\mL^*\ot\omega)\ot H^0(C,\mL)$. Without loss of generality, one can assume $f_i$ are independent in $H^0(\mL^*\ot\omega)$ in which case, $s_i\wedge t=0$ for all $i$. Since this holds for all $t\in H^0(\mE)$, one can conclude that there exists a sub-line bundle $\mF$ of $\mE$ such that $H^0(\mF)=H^0(\mE)$.
We have thus arrived at the following proposition:
\begin{prop}\label{main-basic}
Let $\mE$ be a rank two vector bundle with special determinant on a Petri general curve $C$. If the sections of $\mE$ are not colinear, then the multiplication map
$$m:H^0(C,\mL^*\ot\omega)\ot H^0(C,\mE)\to H^0(C,\mL^*\ot\mE\ot\omega)\cong H^0(C,\mE^*\ot\omega)$$
is injective.
\end{prop}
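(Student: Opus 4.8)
The plan is to establish the contrapositive: if $m$ fails to be injective, then the global sections of $\mE$ are colinear, meaning that there is a sub-line bundle $\mF\subseteq\mE$ with $H^0(\mF)=H^0(\mE)$. The main tool is the commutative square exhibited above, which factors the classical Petri map $H^0(\mL^*\ot\omega)\ot H^0(\mL)\to H^0(\omega)$ through $m\ot\Id$ after one tensors with a section of $\mE$ and applies the determinant pairing $H^0(\mE)\ot H^0(\mE)\to H^0(\mL)$, $s\ot t\mapsto s\w t$.

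First I would pick a nonzero element of $\ker(m)$ and put it in a convenient form: write it as $\sum_i f_i\ot s_i$ with the $f_i\in H^0(\mL^*\ot\omega)$ linearly independent and the $s_i\in H^0(\mE)$ nonzero. (Every tensor admits such an expression after a linear change among the $f$'s, and the element being nonzero forces at least one $s_i\neq 0$.) Next, for an arbitrary $t\in H^0(\mE)$ I would chase $\big(\sum_i f_i\ot s_i\big)\ot t$ around the square. Going down first via $m\ot\Id$ kills it, since $\sum_i f_i\ot s_i\in\ker m$; going right first via $\Id\ot\det$ sends it to $\sum_i f_i\ot(s_i\w t)$, which therefore lies in the kernel of the Petri map.

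Now I would invoke the Gieseker--Petri theorem: since $C$ is Petri general the Petri map is injective, so $\sum_i f_i\ot(s_i\w t)=0$ in $H^0(\mL^*\ot\omega)\ot H^0(\mL)$, and linear independence of the $f_i$ forces $s_i\w t=0$ in $H^0(\mL)$ for every $i$ and every $t\in H^0(\mE)$. Fix an $i$ with $s_i\neq 0$. Reading $s_i\w t$ as a section of $\det\mE\cong\mL$, its identical vanishing for all $t$ says that at the generic point every section of $\mE$ is proportional to $s_i$; equivalently, every section of $\mE$ lies in the saturation $\mF\subseteq\mE$ of $\mO_C\cdot s_i$, a sub-line bundle, so that $H^0(\mF)=H^0(\mE)$. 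This is precisely the asserted colinearity, which contradicts the hypothesis and proves that $m$ is injective.

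The steps that deserve the most care are: (i) verifying that the square commutes and, in particular, that $\Id\ot\det$ really carries $\big(\sum_i f_i\ot s_i\big)\ot t$ to $\sum_i f_i\ot(s_i\w t)$ --- this is just bilinearity together with the definition of the determinant map on sections, but it is the hinge of the argument; and (ii) the concluding sheaf-theoretic deduction, that a global section of the rank-two bundle $\mE$ which is generically proportional to $s_i$ in fact lies in the saturated sub-line bundle generated by $s_i$ (equivalently, the kernel of the $\mO_C$-linear map $\mE\to\mL$, $u\mapsto u\w s_i$, is a sub-line bundle whose space of sections then contains all of $H^0(\mE)$). Neither is difficult, but (ii) is where rank two and the passage to the saturation are essential, and it is worth isolating as a lemma.
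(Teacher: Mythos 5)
Your proposal is correct and follows essentially the same route as the paper: feed a kernel element $\sum_i f_i\ot s_i$ (with the $f_i$ independent) through the commutative square, invoke Gieseker--Petri to get $s_i\w t=0$ for all $t\in H^0(\mE)$, and conclude that all sections lie in a sub-line bundle. Your added care about passing to the saturation of $\mO_C\cdot s_i$ in the last step is a welcome precision the paper leaves implicit, but it is not a different argument.
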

We are now ready to prove our main result:
\begin{proof}[Proof of Theorem \ref{thm2}]
This follows from our previous analysis and several fundamental results in \cite{Ofix}, \cite{Osp2}.

Adopting the framework in \cite{Ofix}, let $\cu$ be any open sub-stack of the moduli stack of tank two vector bundles with fixed determinant $\mL$ over $C$ and denote $\mE_{\cu}$ to be the universal bundle over $\cu$. Fix a reduced effective divisor $D$ as before and denote $D'$ to be the pull-back of $D$ to $C\times\cu$. $m$ independent elements $\phi_1,...,\phi_m\in H^0(C,\mL^*\ot\omega)$ induce $m$ independent symplectic form on $p_*(\mE_{\cu}(D')/\mE_{\cu}(-D'))$, where $p:C\times\cu\to\cu$ is the natural projection. Then, the open sub-stack $\GG_{\cu}(2,\mL,k)$ of $\GG(2,\mL,k)$ (see \ref{thm1}) parametrizing data for which the underlying vector bundle lies in $\cu$ can be identified with the closed-sub-stack of the relative Grassmannian $\GG(k,p_*(\mE_{\cu}(D')/\mE_{\cu}(-D')))$ parametrizing sub-bundles contained in both $p_*(\mE_{\cu}/\mE_{\cu}(-D'))$ and $p_*(\mE_{\cu}(D'))$, which are isotropic with respect to the $m$ symplectic forms. 

For a bundle $\mE$ over $C$ which is contained in $\cu$, Proposition \ref{main-basic} is a point-wise result for $\GG_{\cu}(2,\mL,k)$. We can then apply a codimension counting result (Proposition 2.4) in \cite{Osp2} (and let $\cu$ run through all open sub-stacks of finite type of the moduli of vector bundles) to conclude Theorem \ref{thm2}.
\end{proof}
Now assume the vector bundle $\mE$ is stable and has $k$ sections. By the Classical Brill-Noether Theorem,  when $k$ is sufficiently large, no sub-line bundle of $\mE$ can have $k$ sections. In particular, $\mE$ is generically globally generated. Thus, one gets the following corollary:
\begin{cor}
For a Petri general curve $C$, suppose $g+k(1-g+\lfloor{d\over2}\rfloor)-k^2<0$. Then, every irreducible component of the stable locus of $\GG(2,\mL,k)$ such that $h^1(\mL)=m\ge 1$ has dimension at least
$$\rho^2_{\mL}(k,g):=\rho(2,d,k,g)-g+m\binom{k}{2}.$$
When $m\le 2$, the result holds without the inequality condition.
\end{cor}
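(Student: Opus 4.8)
The plan is to deduce the corollary directly from Theorem \ref{thm2} by showing that, under the stated numerical hypothesis, every stable bundle $\mE$ with $h^0(\mE)=k$ automatically satisfies the generic global generation condition defining $\GG^{\text{gg}}(2,\mL,k)$, so that the stable locus of $\GG(2,\mL,k)$ is contained in $\GG^{\text{gg}}(2,\mL,k)$ and the dimension bound transfers verbatim. First I would recall that a rank two bundle $\mE$ fails to be generically globally generated by its full space of sections precisely when all global sections factor through a single sub-line bundle $\mF\subset\mE$, i.e.\ when $H^0(\mF)=H^0(\mE)$ has dimension $k$. Thus it suffices to rule out the existence of a sub-line bundle $\mF\hookrightarrow\mE$ with $h^0(\mF)\ge k$.

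Next I would bound the degree of such an $\mF$ using stability: since $\mE$ is stable of degree $d$, any sub-line bundle satisfies $\deg\mF\le\lfloor d/2\rfloor$ (indeed $\deg\mF<d/2$, but the floor bound suffices and is what appears in the hypothesis). Applying the classical Brill--Noether inequality (equivalently, Riemann--Roch together with the nonexistence of line bundles violating the Brill--Noether number on a Petri general, hence Brill--Noether general, curve) to the line bundle $\mF$ of degree $e=\deg\mF\le\lfloor d/2\rfloor$, a line bundle with $h^0(\mF)\ge k$ forces the Brill--Noether number $\rho(g,e,k)=g-k(k-e+g-1)\ge 0$. Since $e\le\lfloor d/2\rfloor$ and $\rho(g,e,k)$ is nondecreasing in $e$, this in turn forces $g-k(k-\lfloor d/2\rfloor+g-1)\ge 0$, i.e.\ $g+k(1-g+\lfloor d/2\rfloor)-k^2\ge 0$, contradicting the hypothesis. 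Hence no such $\mF$ exists, $\mE$ is generically globally generated by its sections, and the stable locus embeds into $\GG^{\text{gg}}(2,\mL,k)$; Theorem \ref{thm2} then gives the claimed lower bound on every irreducible component.

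For the final sentence, when $m\le 2$ one does not need the generic global generation hypothesis at all: Theorem \ref{thm1} already furnishes the bound $\rho^2_{\mL}(k,g)$ for every component of $\GG(2,\mL,k)$ when $h^1(\mL)\ge 2$, and the case $m=1$ is the classical result (with $\binom{k}{2}$ the Bertram--Feinberg--Mukai correction), which is covered by the same circle of ideas — so in those cases the inequality $g+k(1-g+\lfloor d/2\rfloor)-k^2<0$ can be dropped. I expect the main (though still minor) obstacle to be the bookkeeping in invoking the classical Brill--Noether theorem with the correct convention: one must be careful that ``Petri general'' implies ``Brill--Noether general'' so that the nonexistence statement $\rho(g,e,k)<0\Rightarrow W^{k-1}_e(C)=\emptyset$ is legitimately available, and that the monotonicity argument in $e$ is applied in the right direction; neither is deep, but stating it cleanly is the only real content beyond citing Theorem \ref{thm2}.
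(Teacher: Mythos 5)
Your proposal is correct and follows essentially the same route as the paper: the paper's (very terse) justification is precisely that stability bounds the degree of any sub-line bundle by $\lfloor d/2\rfloor$, the hypothesis $g+k(1-g+\lfloor d/2\rfloor)-k^2<0$ is exactly the negativity of the classical Brill--Noether number $\rho(g,\lfloor d/2\rfloor,k)$ so that on a Petri (hence Brill--Noether) general curve no such sub-line bundle can carry $k$ sections, whence the stable locus lies in $\GG^{\text{gg}}(2,\mL,k)$ and Theorem \ref{thm2} applies; the $m\le 2$ claim falls back on Theorem \ref{thm1} (and the smoothness of a single symplectic Grassmannian for $m=1$) exactly as you indicate. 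Your write-up merely supplies the monotonicity-in-degree bookkeeping that the paper leaves implicit.
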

In addition, we would like to point out the relation between Theorem \ref{thm2} and a conjecture of Grzegorczyk and Newstead. 

In \cite{CSFD},  Grzegorczyk and Newstead made the following conjecture:
\begin{conj}[Conjecture 2.3 in \cite{CSFD}]
Denote $G(\aaa;r,\mL, k)$ to be the moduli space of $\aaa$-stable coherent systems of type $(r, d, k)$. Then,
every irreducible component $X$ of $G(\aaa;r,\mL, k)$ has dimension
$$\dim X \ge\rho(r, d, k,g)-g+\binom{k}{r}\cdot h^1(\mL)$$
\end{conj}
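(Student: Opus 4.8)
\emph{Proposed approach.} The natural strategy is to rebuild the entire rank-two machinery of the preceding sections with alternating $r$-forms in place of symplectic forms, so that the codimension count $m\binom{k}{2}$ is automatically upgraded to $\binom{k}{r}h^1(\mL)$. Concretely, for a rank $r$ bundle $\mE$ with special determinant $\mL$, the wedge map $\Lambda^r\mE\to\det\mE=\mL$ composed with the $m=h^1(\mL)$ independent morphisms $\phi_i\colon\mL\to\omega$ produces maps $\Lambda^r\mE\to\omega$; imitating the residue construction of Lemma 5.1--5.2 in \cite{Ofix} on $\ti{\mE}=\mE(D)/\mE(-D)$ yields $m$ alternating $r$-linear forms
$$\lag s_1,\dots,s_r\rag_i=\sum_{P\in C}\res_P\phi_i(\ti{s}_{1,P}\w\cdots\w\ti{s}_{r,P})$$
on $H^0(\ti{\mE})$, with respect to which $V=H^0(\mE)$ is isotropic. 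One then defines the \emph{multiply $r$-symplectic Grassmannian} as the locus of simultaneously isotropic $k$-planes and computes its tangent space exactly as in the earlier sections: the first-order isotropy condition attaches to $f\in\ho(V,E/V)$ the defect $r$-form $(v_1,\dots,v_r)\mapsto\sum_{l}\lag v_1,\dots,f(v_l),\dots,v_r\rag_i$, which is again alternating, so requiring it to vanish imposes $\dim\Lambda^rV=\binom{k}{r}$ conditions per form. Hence the tangent space has dimension at least $k(n-k)-m\binom{k}{r}$, with equality iff the analogue of $\Phi\colon\Lambda^rV\ot\Omega\to\ho(V,E/V)^*$ is injective.

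First I would establish the reformulation generalizing Propositions \ref{main3} and \ref{main2}: failure of injectivity corresponds to a non-trivial family of elements of $\Lambda^rV\ot\Omega$ lying in $\ker(j_V)$. Translating into residues as in Section \ref{mainsection}, using $\supp(Z)\cap\supp(D)=\es$ together with $\deg(D)$ large, the vanishing forces certain combinations of the $s_i$ --- now built from wedges of $r-1$ sections --- to lie in $\Gamma(\mE(Z-D))$ and hence to vanish. The correct multiplication map is dictated by the duality $\mE^*\cong\Lambda^{r-1}\mE\ot\mL^{-1}$, namely
$$m\colon H^0(C,\mL^*\ot\omega)\ot H^0(C,\Lambda^{r-1}\mE)\to H^0(C,\Lambda^{r-1}\mE\ot\mL^*\ot\omega)\cong H^0(C,\mE^*\ot\omega),$$
and the Petri-map diagram of Section \ref{mainsection} generalizes by replacing $\det\colon H^0(\mE)\ot H^0(\mE)\to H^0(\mL)$ with the wedge pairing $H^0(\Lambda^{r-1}\mE)\ot H^0(\mE)\to H^0(\mL)$. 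The Gieseker--Petri theorem \cite{Gieseker} then kills the image, and one extracts a rank-$r$ non-degeneracy statement in the role of Proposition \ref{main-basic}, the correct open condition being that $\mE$ is generically globally generated by $V$ (matching the definition of $\GG^{\text{gg}}$). Finally, exactly as in the proof of Theorem \ref{thm2}, I would relate $\aaa$-stable coherent systems of type $(r,d,k)$ to the isotropic locus in a relative Grassmannian over an open substack $\cu$, and invoke the codimension-counting result (Proposition 2.4) in \cite{Osp2}, letting $\cu$ exhaust the moduli stack, to obtain $\dim X\ge\rho(r,d,k,g)-g+\binom{k}{r}h^1(\mL)$.

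The dimension bookkeeping falls out for free and the Petri diagram chase should go through once the wedge pairing is correctly inserted; the genuine obstacle lies one step earlier. The clean geometric criterion of Theorem \ref{prop:tech} --- and the inductive reduction in its proof --- rested entirely on the simultaneous normal form for a pencil of antisymmetric matrices (Thm XV.8.1 in \cite{L}) and on the self-duality $\Lambda^2V\cong\{f\in\ho(V^*,V)\mid v(f(v))=0\}$ used in Proposition \ref{main3}. Neither survives for $r\ge3$: there is no analogous normal form for a pencil of alternating $r$-forms (the classification of alternating $3$-forms is already moduli-laden), and $\Lambda^rV$ admits no comparable operator description. I therefore expect that the explicit iff-criterion cannot be generalized, and that the argument must instead proceed directly through $\ker(m)$, controlling the auxiliary map $\Lambda^{r-1}H^0(\mE)\to H^0(\Lambda^{r-1}\mE)$ --- which is neither injective nor surjective in general --- well enough to run the Petri chase. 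Making this control unconditional, or pinning down the precise non-degeneracy hypothesis under which it holds, is the crux, and is exactly why the present paper confirms the conjecture only in rank two.
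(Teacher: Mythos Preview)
The statement you are attempting is a \emph{conjecture}; the paper does not prove it and offers no proof to compare against. What the paper actually establishes is the rank-two case over a Petri general curve under the additional hypothesis that the pair $(\mE,V)$ is generically globally generated (Theorem \ref{thm2}), and then observes that for $\aaa$ sufficiently large, $\aaa$-stability forces generic global generation, so the conjecture is confirmed only for $r=2$ and large $\aaa$. Your write-up is therefore not a proof but a research outline for a possible generalization, and you yourself say as much in the final paragraph.

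Two remarks on your self-diagnosis. First, you overweight the role of Theorem \ref{prop:tech} and the normal-form result from \cite{L}. In the paper, that machinery is used for Theorem \ref{thm1} (the unconditional $m=2$ statement); it is \emph{not} used for Theorem \ref{thm2}. The path to Theorem \ref{thm2} goes through Proposition \ref{main2} $\to$ Proposition \ref{main6} $\to$ Proposition \ref{main-basic}, and the last step is pure Petri-map linear algebra with no normal forms in sight. So the absence of a canonical form for pencils of alternating $r$-forms is irrelevant to the coherent-systems conjecture; the genuine bottleneck is exactly the one you name at the end: controlling the map $\Lambda^{r-1}H^0(\mE)\to H^0(\Lambda^{r-1}\mE)$. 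Even if the Petri argument shows that the image of your combination in $H^0(\Lambda^{r-1}\mE)$ vanishes, a non-trivial kernel of this map prevents you from concluding that the $\phi_{i_1\cdots i_r}$ themselves vanish, and that is where the rank-two argument genuinely breaks.

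Second, even granting every step of your outline, the output would be a rank-$r$ analogue of Theorem \ref{thm2}: a bound on $\GG^{\text{gg}}(r,\mL,k)$ over a Petri general curve, hence a verification of the conjecture only for large $\aaa$ and general $C$. The conjecture as stated carries neither hypothesis, so your plan would at best yield another partial confirmation, not a proof.
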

It is well-known that over a Petri general curve $\aaa$-stability implies generically globally generated when $\aaa$ sufficiently large (see Theorem 3.1 in \cite{CSB}). Translating our result into the language of coherent systems, one sees that our result verifies this conjecture in rank two for large $\aaa$ values.
\section{A Non-example}\label{non}
Lastly, we give an example to show how the Petri generality assumption rules out cases in which the multiplication map $m$ fails to be injective.    

Notice that if $s,fs\in H^0(C,\mL^*\ot\omega)$ and $s',fs',\in H^0(C,\mE)$, where $f\in K(C)$, then $s\ot (fs')-(fs)\ot s'\in \ker(m)$. We therefore  have the following non-example:

\begin{lem}\label{non1}
  The condition in \eqref{main5} holds for three independent sections $s_1,s_2,s_3\in\Gamma(C,\mE)$ and rational functions $f_{ij}$ not all zero ($1\le i,j\le 3$, $i<j$) (such that for some $\phi\in\Gamma(C,\mL^*\ot\omega)$, $f_{ij}\phi\in\Gamma(C,\mL^*\ot\omega)$) if and only if there exists an invertible subsheaf $\mL'$ of $\mE$ such that $h^0(\mL')\ge 3$ and $\mL'$ injects into $\mL^*\ot\omega$.
\end{lem}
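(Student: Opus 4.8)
The plan is first to unwind \eqref{main5} in the case $k=3$. Setting $g_1=f_{23}$, $g_2=-f_{13}$, $g_3=f_{12}$, the three equations (one for each $j=1,2,3$) say precisely that $g_is_j=g_js_i$ for all $i\neq j$, viewed as identities of rational sections of $\mE$. Since $s_1,s_2,s_3$ are independent and hence nonzero, the relations $g_1s_2=g_2s_1$ and $g_1s_3=g_3s_1$ show that $g_1=0$ would force $g_2=g_3=0$; thus ``not all $f_{ij}$ zero'' is equivalent to $g_1\neq 0$, in which case $s_2=(g_2/g_1)s_1$ and $s_3=(g_3/g_1)s_1$. So the content of \eqref{main5} for three sections is exactly that $s_1,s_2,s_3$ are rational multiples of a single section of $\mE$.

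For the ``only if'' direction I would take $\mL'\subseteq\mE$ to be the $\mO_C$-subsheaf generated by $s_1,s_2,s_3$. Being rank one and torsion free on a smooth curve it is invertible, and it contains the independent sections $s_1,s_2,s_3$, so $h^0(\mL')\ge 3$. To inject $\mL'$ into $\mL^*\ot\omega$, note that the hypothesis supplies the globally regular sections $g_1\phi,g_2\phi,g_3\phi\in H^0(\mL^*\ot\omega)$ (each is, up to sign, one of the $f_{ij}\phi$), and that $g_i\phi=(g_i/g_1)(g_1\phi)$, so they have the same pairwise ratios as the $s_i$. Hence, writing $\mathcal{I}\subseteq K(C)$ for the $\mO_C$-submodule generated by $1,g_2/g_1,g_3/g_1$, we get $\mL'=\mathcal{I}\cdot s_1$ inside $\mE$, while the subsheaf $\mathcal{M}'$ of $\mL^*\ot\omega$ generated by $g_1\phi,g_2\phi,g_3\phi$ equals $\mathcal{I}\cdot(g_1\phi)$; the rule $a\,s_1\mapsto a\,(g_1\phi)$, $a\in\mathcal{I}$, then gives an isomorphism $\mL'\xrightarrow{\sim}\mathcal{M}'$, which composed with $\mathcal{M}'\hookrightarrow\mL^*\ot\omega$ is the desired injection. (Here $g_1\phi\neq 0$ since $g_1\neq 0$ and $\phi\neq 0$, the latter because $\mL$ is special.)

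The converse is a direct construction. Given an injection $\kappa\colon\mL'\hookrightarrow\mL^*\ot\omega$ with $h^0(\mL')\ge 3$, together with the structure map $\mL'\hookrightarrow\mE$, pick independent $\sigma_1,\sigma_2,\sigma_3\in H^0(\mL')$, let $s_i$ be their images in $H^0(\mE)$ (still independent), and put $\phi:=\kappa(\sigma_1)$, $f_{23}=1$, $f_{13}=-\sigma_2/\sigma_1$, $f_{12}=\sigma_3/\sigma_1$. Then by $\mO_C$-linearity of $\kappa$ one computes $f_{23}\phi=\kappa(\sigma_1)$, $f_{12}\phi=\kappa(\sigma_3)$ and $f_{13}\phi=-\kappa(\sigma_2)$, all of which lie in $H^0(\mL^*\ot\omega)$; the $f_{ij}$ are not all zero; and each equation of \eqref{main5} holds because the corresponding combination of the $s_i$ is the image under $\mL'\hookrightarrow\mE$ of a combination of the $\sigma_i$ that vanishes identically (e.g.\ $f_{12}s_2+f_{13}s_3$ is the image of $(\sigma_3/\sigma_1)\sigma_2-(\sigma_2/\sigma_1)\sigma_3=0$).

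The point requiring the most care is the middle one: checking that the matching $s_i\leftrightarrow g_i\phi$ genuinely defines a morphism of sheaves $\mL'\to\mL^*\ot\omega$ rather than merely a rational map. This is precisely where both hypotheses enter — the equality of ratios comes from \eqref{main5}, and the fact that $g_1\phi,g_2\phi,g_3\phi$ are regular sections of $\mL^*\ot\omega$ (so that $\mathcal{M}'$ really sits inside $\mL^*\ot\omega$) comes from the condition $f_{ij}\phi\in\Gamma(C,\mL^*\ot\omega)$ — after which everything reduces to the bookkeeping with fractional ideals sketched above.
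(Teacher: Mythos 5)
Your proof is correct and follows essentially the same route as the paper's: both directions reduce \eqref{main5} to the statement that $s_1,s_2,s_3$ are rational multiples of one another with the same ratios as the regular sections $f_{ij}\phi$ of $\mL^*\ot\omega$, and then produce $\mL'$ as a line bundle carrying all three sections together with compatible injections into $\mE$ and $\mL^*\ot\omega$. The only cosmetic difference is that you take $\mL'$ to be the subsheaf of $\mE$ generated by $s_1,s_2,s_3$ and transport it into $\mL^*\ot\omega$ via the common ratio with $g_1\phi$, whereas the paper uses $\mO(\min(\dv(s_3),\dv(\phi)))$; your explicit normalization by $g_1$ also makes precise the ``without loss of generality'' step that the paper leaves implicit.
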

\begin{proof}
Given $\mL'$ to be such a subsheaf, let $s_1,s_2,s_3$ be three sections of $\mL'$ such that $s_2=fs_1,s_3=gs_1$. Denote $\phi\in H^0(C,\mL^*\ot\omega)$ to be the image of $s_1$ under the injection $H^0(C,\mL')\to H^0(C,\mL^*\ot\omega)$ induced by the injection $\mL'\to\mL^*\ot\omega$. Denote $\phi_2=f\cdot\phi,\phi_3=g\cdot\phi$. It is not hard to see that
\begin{equation}
\left\{\begin{aligned}
&0+(-1)s_2+fs_1=0\\
&(-1)s_3+0+gs_1=0\\
&(-f)s_3+(-g)s_2+0=0
\end{aligned}\right.
\end{equation}
in other words, the condition in \eqref{main5} holds.

Conversely, assume the condition in \eqref{main5} holds for $s_1,s_2,s_3\in\Gamma(C,\mE)$ and $\phi,f_{ij}\phi\in\Gamma(C,\mL^*\ot\omega)$ ($1\le i,j\le 3$, $i<j$). Clearly, \eqref{main5} implies $s_1,s_2,s_3$ are co-linear. Suppose $s_1=fs_3,s_2=gs_3$ and without loss of generality one can assume $f_{13}=-f,f_{12}=-g$. Let $D_1=\dv(s_3)$, $D_2=\dv(\phi)$. Then, $(f)+D_i,(g)+D_i\ge0$, for $i=1,2$. Write $D_i=\sum n^i_P P$ and define $Y=\sum\min\{n^1_P,n^2_P\}P$. $Y$ is an effective divisor and $(f)+Y\ge0, (g)+Y\ge0$. In particular, $\mO(Y)$ is an invertible subsheaf of both $\mE$ and $\mL^*\ot\omega$, and $h^0(\mO(Y))\ge3$.
\end{proof}

\subsection*{A remark on the non-example} The following argument by Brian Osserman explains how Petri generality precludes the non-example stated in Lemma \ref{non1}. Suppose the curve $C$ is Petri general. Given some $\mL'$ as in \ref{non1}, one obtains a pencil with an injection $0\to\mL'\to\mL^*\ot\omega$. Suppose $H^0(\mL')\neq H^0(\mE)$. One gets a short exact sequence $0\to\mL'\to \mE\to Q\to0$ such that $h^0(Q)\ge 1$. Since $Q\cong\mL'^*\ot\mL$, this implies there is an injection $0\to\mL'\to\mL$. Taking a product of the two morphisms, one obtains an injection $0\to\mL'^{\ot2}\to\omega$, i.e. $h^1(\mL'^{\ot2})\ge1$. This violates the Petri generality assumption.

\bibliographystyle{amsalpha}
\bibliography{myrefs}
\end{document}